\newcommand{\al}{\alpha}
\newcommand{\be}{\beta}
\newcommand{\ga}{\gamma}
\newcommand{\de}{\delta}
\newcommand{\la}{\lambda}
\newcommand{\om}{\omega}
\newcommand{\eps}{\varepsilon}
\newcommand{\vv}{\varphi}
\theoremstyle{plain}
\numberwithin{equation}{section}
\newtheorem{thm}{Theorem}[section]
\newtheorem{lem}[thm]{Lemma}
\newtheorem{prop}[thm]{Proposition}
\newtheorem{cor}[thm]{Corollary}
\theoremstyle{definition}
\newtheorem{ip}[thm]{Inverse Problem}
\newtheorem{defin}[thm]{Definition}
\newtheorem{alg}[thm]{Algorithm}
\theoremstyle{remark}
\newtheorem{remark}[thm]{Remark}
\DeclareMathOperator*{\Res}{Res}
\DeclareMathOperator{\sign}{sign}
\begin{document}

\begin{center}
{\Large\bf Quasiperiodic inverse Sturm-Liouville problem}
\\[0.5cm]
{\bf Natalia P. Bondarenko}
\end{center}

\vspace{0.5cm}

{\bf Abstract.} In this paper, the Sturm-Liouville problem with nonseparated quasiperiodic boundary conditions is considered. We study the recovery of the problem parameters from the Hill-type discriminant, the Dirichlet spectrum, and the sequence of signs. We obtain the necessary and sufficient conditions of solvability, the local solvability and stability, as well as the uniform stability for this inverse spectral problem.

\medskip

{\bf Keywords:} quasiperiodic inverse problem; nonseparated boundary conditions; spectral data characterization; local solvability; uniform stability.

\medskip

{\bf AMS Mathematics Subject Classification (2020):} 34A55 34B09 34L40

\section{Introduction} \label{sec:intr}

Consider the boundary value problem $\mathcal L = \mathcal L(q, a, h)$ for the Sturm-Liouville equation
\begin{equation} \label{eqv}
-y'' + q(x) y = \rho^2 y, \quad x \in (0,1),
\end{equation}
with the quasiperiodic boundary conditions
\begin{equation} \label{bc}
y(0) = a y(1), \quad y'(0) - h y(0) = a^{-1} y'(1),
\end{equation}
where the so-called potential $q$ belongs to the class $L_{2,\mathbb R}(0,1)$ of the real-valued square-integrable functions, $\rho^2$ is the spectral parameter, $a \in \mathbb R \setminus \{ -1, 0, 1 \}$, and $h \in \mathbb R$.

This paper is focused on the inverse spectral theory for the quasiperiodic problem \eqref{eqv}--\eqref{bc}.
We study the reconstruction of $q$, $a$, and $h$ from the Hill-type discriminant $d(\rho)$, the Dirichlet eigenvalues $\{ \rho_n^2 \}_1^{\infty}$, and the sequence of signs $\{ \sigma_n \}_1^{\infty}$ (see Section~\ref{sec:main} for details). In this paper, we for the first time obtain the necessary and sufficient conditions of solvability, the local solvability and stability, as well as the uniform stability for this inverse problem.

The majority of classical results for inverse Sturm-Liouville problems are concerned with separated (Dirichlet and Robin) boundary conditions (see, e.g., the monographs \cite{Mar77, Lev84, PT87, FY01} and references therein). Furthermore, the inverse spectral theory for the periodic ($a = 1, \, h = 0$) and antiperiodic ($a = -1, \, h = 0$) boundary conditions \eqref{bc} has been extensively investigated in \cite{Stan70, MO75, Yur81, TS97, Kor97, Kor99, Kor03, Yur17-tamk} and other studies. Note that the quasiperiodic boundary conditions in the case $a \in \mathbb R \setminus \{ -1, 0, 1 \}$ are strongly regular, so they fundamentally differ from the periodic and antiperiodic ones, which are regular but not strongly regular (see \cite[p.~268]{Fr12}). Therefore, in this paper, we exclude $a = \pm 1$ from the consideration. 

For the second-order differential equation \eqref{eqv}, there exist two irreducible types of self-adjoint nonseparated boundary conditions:
\begin{gather*} \label{type1}
y(0) = w y(1), \quad y'(0) - h y(0) = \overline{w}^{-1} y'(1), \tag{I} \\ \label{type2}
y'(0) + \be y(0) + w y(1) = 0, \quad y'(1) - \overline{w} y(0) + \ga y(1) = 0, \tag{II}
\end{gather*}
where $w \ne 0$ is complex and $h, \be, \ga$ are reals. Obviously, conditions \eqref{bc} are a special case of type~\eqref{type1}.

Inverse Sturm-Liouville problems with general nonseparated boundary conditions of types \eqref{type1} and \eqref{type2} started to be investigated by Plaksina~\cite{Plak78, Plak80, Plak88} and continued by Guseinov and Nabiev \cite{GN89, GN95} and by Gasymov et al \cite{GGN90}. Later on, Guseinov and Nabiev generalized their methods and results to quadratic differential pencils \cite{GN07} and to the case of eigenparameter-dependent nonseparated boundary conditions \cite{Nab22} (see also the other studies of these authors).
Plaksina followed the approach of Marchenko and Ostrovskii~\cite{MO75}, which allowed them to obtain the characterization of the periodic and antiperiodic eigenvalues. It is worth noting that the periodic and antiperiodic spectra are closely related to each other, being the zeros of the Hill discriminants $d(\rho) \pm 2$, and they do not uniquely determine the potential $q$.
Consequently, the early studies \cite{Plak78, Plak80, Plak88, GN89, GN95, GGN90} were mainly focused on establishing relations between pairs of spectra that corresponded to different sets of nonseparated boundary conditions. Anyway, Plaksina in \cite{Plak88} has actually shown the uniqueness of recovering the potential from the quasiperiodic spectrum generated by the general conditions of type~\eqref{type1}, the Dirichlet spectrum, and the corresponding sequence of signs. The similar spectral data for the first time were introduced by Stankevich~\cite{Stan70} for the periodic problem. This type of spectral data appeared to be especially useful for applications to inverse spectral problems on quantum graphs (see \cite{Yur09, Yur14, Yur17-res, BS20}). Another kind of inverse problems with nonseparated boundary conditions was investigated by Makin~\cite{Mak07, Mak08, Mak10}. He considered non-self-adjoint regular (but not strongly regular)  and irregular Sturm-Liouville problems and obtained the characteristic properties of their spectra, which do not uniquely specify the potential.

A significant progress in the study of inverse spectral problems with nonseparated boundary conditions has been achieved by Yurko \cite{Yur81, Yur00, Yur12-tamk, Yur12-amp, Yur16, Yur17-tamk} (see also the survey \cite{Yur20}). In particular, in \cite{Yur81,Yur00}, the spectral data characterization for the Sturm-Liouville problem with the nonseparated boundary conditions of type~\eqref{type2} was obtained. Moreover, local solvability and stability of the corresponding inverse problem were proved. 

The series of papers \cite{Yur12-tamk, Yur12-amp, Yur16} deals with the Sturm-Liouville operators and quadratic differential pencils induced by the general non-self-adjoint quasiperiodic boundary conditions
\begin{equation} \label{nsabc}
y(0) = \al y(T), \quad y'(0) - (i \rho h + h_0) y(0) = \be y'(T),
\end{equation}
where $\al$, $\be$, $h$, and $h_0$ can be arbitrary complex numbers. 
Yurko \cite{Yur12-tamk, Yur12-amp, Yur16} has proved the uniqueness theorems and obtained constructive procedures for solving the corresponding inverse spectral problems. We also mention that the investigation of inverse problems for quasiperiodic non-self-adjoint differential pencils was initiated by Buterin in the conference paper \cite{But09}. Recently, Zhang et al \cite{ZY22, ZWY24-res, ZWY24-aml} studied periodic inverse problems with discontinuities, which, in fact, are special cases of the quasiperiodic problems in \cite{Yur16, Yur20}. The boundary conditions \eqref{nsabc}, obviously, generalize \eqref{bc}. However, the results of the above-mentioned papers for the corresponding type of inverse spectral problems are limited to uniqueness and reconstruction. To the best of the author's knowledge, there were no results on solvability and stability of such inverse problems. So, this paper aims to fill this gap.

It is worth mentioning that
the Sturm-Liouville problems with discontinuity conditions
\begin{equation} \label{discont}
y(s + 0) = a y(s - 0), \quad y'(s + 0) = a^{-1} y'(s - 0) + c y(s - 0)
\end{equation}
inside an interval arise in various physical applications. Indeed, the Sturm-Liouville equation 
$$
-(p(x) y')' + q(x) y = \la w(x) y
$$ 
with a positive weight $w(x)$ having a jump at the point $s$ can be reduced to the Schr\"odinger form \eqref{eqv} with the transmission conditions \eqref{discont}. Inverse Sturm-Liouville problems with discontinuities appear in geophysics \cite{Hald84}, acoustics \cite{DP17}, and other fields in connection with discontinuous material properties (see \cite[Section~4.4]{FY01} for more references). Furthermore, the conditions \eqref{discont} describe $\de'$-interactions in quantum mechanics. A recent overview on this topic can be found in \cite{GH25}. The quasiperiodic boundary conditions \eqref{bc} correspond to a loop with a discontinuity of type \eqref{discont} at the point $x = 0$, which is connected to $x = 1$. Our motivation of studying the quasiperiodic inverse problem arises from the investigation of the above-mentioned quantum graphs with cycles.

In this paper, we obtain the spectral data characterization (Theorem~\ref{thm:nsc}) for the problem $\mathcal L$ in terms of the Hill-type discriminant $d(\rho)$, which can be constructed by using the eigenvalues of $\mathcal L$, the Dirichlet spectrum, and the corresponding sequence of signs. The proof method relies on the reconstruction algorithm from \cite{Yur12-tamk, Yur12-amp, Yur16}. Furthermore, we consider a small perturbation of the spectral data of the problem $\mathcal L$ and prove Theorem~\ref{thm:loc} on the local solvability and stability of the inverse problem. We point out that our proof method is simpler than the one from \cite{Yur00} for another type of nonseparated boundary conditions. We directly reduce the quasiperiodic inverse problem to the classical inverse Sturm-Liouville problem with the Dirichlet boundary conditions and apply the result from \cite{BSY13} on local solvability and stability of the latter problem. Moreover, we show the unconditional uniform stability of the quasiperiodic inverse problem (Theorem~\ref{thm:uni}). The results of this kind for the Sturm-Liouville problem with the Dirichlet boundary conditions have been obtained by Savchuk and Shkalikov \cite{SS10, SS13}. Specifically, they have found the sets of spectral data, on which the stability estimate is uniform. Our constructive approach allows us to transfer a result from \cite{SS10, SS13} to the quasiperiodic problem \eqref{eqv}--\eqref{bc}. In the future, our results can be applied to investigate solvability and stability of inverse spectral problems on quantum graphs. 

The paper is organized as follows. Section~\ref{sec:main} contains rigorous formulations of the quasiperiodic inverse problem and the main results. In Section~\ref{sec:class}, we provide the known results for the inverse Sturm-Liouville problem with the Dirichlet boundary conditions that are necessary for the proofs of the main theorems. In Section~\ref{sec:alg}, we adapt the reconstruction method from \cite{Yur12-tamk, Yur12-amp, Yur16} to the considered quasiperiodic problem. Section~\ref{sec:proof} contains the proof of the main results (Theorems~\ref{thm:nsc}, \ref{thm:loc}, and \ref{thm:uni}). In Section~\ref{sec:eigen}, we reformulate the main results by using the eigenvalues of the quasiperiodic problem \eqref{eqv}--\eqref{bc} instead of the Hill-type discriminant. 

\smallskip

Throughout this paper, we use the notation:
\begin{itemize}
\item The prime $y'$ denotes the derivative by $x$ or $t$, the dot $\dot y$, by $\la = \rho^2$.
\item $a_{\pm} := \frac{1}{2} (a + a^{-1})$.
\item $\sign a = \begin{cases}
1, & a > 0, \\
0, & a = 0, \\
-1, & a < 0.
\end{cases}$
\item $PW_{\pm}(0,1)$ are the classes of even and odd Paley-Wiener functions:
\begin{align*}
& PW_+(0,1) = \left\{ F_+(\rho) = \int_0^1 f(t) \cos \rho t \, dt \colon f \in L_{2,\mathbb R}(0,1) \right\}, \\
& PW_-(0,1) = \left\{ F_-(\rho) = \int_0^1 f(t) \sin \rho t \, dt \colon f \in L_{2,\mathbb R}(0,1) \right\}.
\end{align*}
\item $l_2^1$ is the linear normed space of the infinite sequences $\{ a_n \}_1^{\infty}$ such that 
$$
\| \{ a_n \}_1^{\infty} \|_{l_2^1} := \left( \sum\limits_{n = 1}^{\infty} (n |a_n|)^2 \right)^{1/2} < \infty.
$$
\item Along with $\mathcal L = \mathcal L(q, a, h)$, we consider the eigenvalue problem $\tilde{\mathcal L} = \mathcal L(\tilde q, a, h)$ of the same form \eqref{eqv}--\eqref{bc} but with a different potential $\tilde q \in L_{2,\mathbb R}(0,1)$. We agree that, if a symbol $\ga$ denotes an object related to $\mathcal L$, then the symbol $\tilde \ga$ with tilde will denote the similar object related to $\tilde{\mathcal L}$.
\end{itemize}

\section{Main results} \label{sec:main}

Denote by $\vv(x, \rho)$ and $S(x, \rho)$ the solutions of equation \eqref{eqv} under the initial conditions
$$
\vv(0, \rho) = 1, \quad \vv'(0, \rho) = h, \quad S(0,\rho) = 0, \quad S'(0,\rho) = 1.
$$

As the main spectral characteristics of the quasiperiodic problem $\mathcal L$, we use the function
\begin{equation} \label{defd}
d(\rho) := a \vv(1, \rho) + a^{-1} S'(1,\rho).
\end{equation}

We call $d(\rho)$ the Hill-type discriminant, because it generalizes the Hill discriminant $\vv(1,\rho) + S'(1,\rho)$, which arises in investigation of the periodic Sturm-Liouville problem (see \cite{Stan70}). Note that the eigenvalues of the problem $\mathcal L$ coincide with the squared zeros of the characteristic function 
\begin{equation} \label{defDelta}
\Delta(\rho) := d(\rho) - 2.
\end{equation}
Therefore, the Hill-type discriminant $d(\rho)$ can be easily recovered from the quasiperiodic eigenvalues (see Section~\ref{sec:eigen} for details).

Next, denote by $\{ \rho_n^2 \}_1^{\infty}$ ($-\tfrac{\pi}{2} < \arg \rho_n \le \tfrac{\pi}{2}$) the eigenvalues of the boundary value problem for equation \eqref{eqv} with the Dirichlet conditions $y(0) = y(1) = 0$. It is well-known that the eigenvalues $\{ \rho_n^2 \}_1^{\infty}$ are real and simple: $\rho_n^2 < \rho_{n+1}^2$, $n \ge 1$. 

Introduce the notations 
\begin{equation} \label{defH}
H(\rho) := a \vv(1,\rho) - a^{-1} S'(1,\rho), \quad \sigma_n := \sign H(\rho_n), \quad n \ge 1.
\end{equation}

We study the following inverse spectral problem:

\begin{ip} \label{ip:1}
Given $d(\rho)$, $\{ \rho_n \}_1^{\infty}$, and $\{ \sigma_n \}_1^{\infty}$, find $q$, $a$, and $h$.
\end{ip}

The results of \cite{Plak88} imply the uniqueness of solution for Inverse Problem~\ref{ip:1}. In this paper, we obtain the spectral data characterization:

\begin{thm} \label{thm:nsc}
Let $a \in \mathbb R \setminus \{ -1, 0, 1 \}$ be fixed. Then, for a collection $d(\rho)$, $\{ \rho_n \}_1^{\infty}$, $\{ \sigma_n \}_1^{\infty}$ to be the spectral data of a problem $\mathcal L$ of form \eqref{eqv}--\eqref{bc}, the following conditions are necessary and sufficient:
\begin{enumerate}
\item The function $d(\rho)$ has the form
\begin{equation} \label{intd}
d(\rho) = 2 a_+ \cos \rho + \frac{b \sin \rho}{\rho} + \frac{1}{\rho} \int_0^1 D(t) \sin \rho t \, dt, \quad b \in \mathbb R, \quad D \in L_{2,\mathbb R}(0,1).
\end{equation}
\item The numbers $\{ \rho_n^2 \}_1^{\infty}$ are real, $\rho_n^2 < \rho_{n+1}^2$, $n \ge 1$, and the asymptotic relation holds:
\begin{equation} \label{asymptrho}
\rho_n = \pi n + \frac{\om}{\pi n} + \frac{\varkappa_n}{n}, \quad \om \in \mathbb R, \quad \{ \varkappa_n \} \in l_2, \quad n \ge 1.
\end{equation}
\item $(-1)^n \sign a \cdot d(\rho_n) \ge 2$, $n \ge 1$.
\item $\sigma_n = 0$ iff $d(\rho_n) = \pm 2$.
\item $\sigma_n = (-1)^n \sign a_-$ for all sufficiently large $n$.
\end{enumerate}
\end{thm}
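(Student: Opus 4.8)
The plan is to treat necessity and sufficiency separately, with the pivotal object being the Wronskian identity at the Dirichlet eigenvalues. Since $W[\vv, S] \equiv 1$ and $S(1,\rho_n) = 0$, we have $\vv(1,\rho_n) S'(1,\rho_n) = 1$, hence $a\vv(1,\rho_n)\cdot a^{-1}S'(1,\rho_n) = 1$. Writing $d(\rho_n) = a\vv(1,\rho_n) + a^{-1}S'(1,\rho_n)$ and $H(\rho_n) = a\vv(1,\rho_n) - a^{-1}S'(1,\rho_n)$, this yields the key relation
\[
d(\rho_n)^2 - H(\rho_n)^2 = 4, \qquad n \ge 1,
\]
around which everything below revolves.

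For necessity, I would first invoke the transformation-operator representations of $\vv(1,\rho)$ and $S'(1,\rho)$: each has the form $\cos\rho + c\,\tfrac{\sin\rho}{\rho} + \tfrac1\rho\int_0^1(\cdots)\sin\rho t\,dt$ with $c_{\vv} = h + \tfrac12\int_0^1 q$ and $c_{S'} = \tfrac12\int_0^1 q$, whence the combination $d(\rho)$ has the claimed form \eqref{intd} with $b = ah + a_+\int_0^1 q$ and $D \in L_{2,\mathbb R}(0,1)$; this is condition~1. Condition~2 is the classical asymptotics of the Dirichlet spectrum with $\om = \tfrac12\int_0^1 q$. For condition~3, the identity gives $|d(\rho_n)| \ge 2$ by the arithmetic--geometric inequality applied to $u = a\vv(1,\rho_n)$ and $v = a^{-1}S'(1,\rho_n)$ with $uv = 1$ (forcing $u,v$ to share a sign); that common sign equals $\sign d(\rho_n)$, and since Sturm oscillation theory forces $\sign S'(1,\rho_n) = (-1)^n$, we get $\sign d(\rho_n) = (-1)^n\sign a$, i.e.\ $(-1)^n\sign a\cdot d(\rho_n) \ge 2$. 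Condition~4 is immediate from $H(\rho_n)^2 = d(\rho_n)^2 - 4$, and condition~5 follows from $H(\rho_n) = 2a_-\cos\pi n + O(n^{-1}) = 2a_-(-1)^n + O(n^{-1})$ together with $a_- \ne 0$.

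For sufficiency I would run the reconstruction algorithm of Section~\ref{sec:alg}. From condition~2 I build $S(1,\rho)$ as the canonical product over its zeros $\{\pm\rho_n\}$, which also yields $\dot S(1,\rho_n)$ with $\sign \dot S(1,\rho_n) = (-1)^n$. Using condition~3 (radicand $\ge 0$) and condition~4 (sign-zero matched to $d(\rho_n) = \pm 2$), I set $H(\rho_n) := \sigma_n\sqrt{d(\rho_n)^2 - 4}$ and then $S'(1,\rho_n) := \tfrac{a}{2}(d(\rho_n) - H(\rho_n))$; since $(d-H)(d+H) = 4$, this is nonzero, and the sign analysis of condition~3 gives $\sign S'(1,\rho_n) = (-1)^n$. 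Hence the norming constants $\alpha_n := \dot S(1,\rho_n) S'(1,\rho_n)$ are positive, and conditions~1,~2,~5 furnish their correct asymptotics. The pair $\{\rho_n^2, \alpha_n\}$ is then admissible Dirichlet data, and the classical inverse theorem of Section~\ref{sec:class} produces a potential $q$ realizing it; finally I recover $h$ from \eqref{intd} via $h = (b - a_+\int_0^1 q)/a$.

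The main obstacle is the closing consistency step: one must show that the discriminant $\hat d(\rho) = a\vv(1,\rho) + a^{-1}S'(1,\rho)$ built from the reconstructed $(q,a,h)$ coincides with the prescribed $d(\rho)$, not merely at the points $\rho_n$. The Wronskian identity (again using $S(1,\rho_n)=0$) forces $\vv(1,\rho_n) = 1/S'(1,\rho_n) = \tfrac{1}{2a}(d(\rho_n)+H(\rho_n))$, so that $\hat d(\rho_n) = d(\rho_n)$; the choice of $h$ matches the leading terms $2a_+\cos\rho$ and $b\,\tfrac{\sin\rho}{\rho}$. Thus $\hat d - d$ is an even entire function of exponential type $\le 1$ vanishing at every $\pm\rho_n$ and decaying like $O(\rho^{-1})$ on the real axis; dividing by $S(1,\rho)$ (type $1$, with simple zeros exactly at $\pm\rho_n$) gives an entire function of exponential type $0$ that decays, hence vanishes identically by a Phragm\'en--Lindel\"of argument, so $\hat d \equiv d$. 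Verifying the precise $l_2^1$-type asymptotics of $\alpha_n$ required to apply the classical characterization, and this rigidity argument, are where the real work lies; the remainder is bookkeeping built on the single identity $d(\rho_n)^2 - H(\rho_n)^2 = 4$.
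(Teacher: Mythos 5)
Your proposal is correct and shares the paper's skeleton --- necessity from the transformation-operator representations plus the Wronskian identity at the Dirichlet eigenvalues, sufficiency by running the reconstruction algorithm and reducing to the classical Dirichlet inverse problem (you use norming constants $\al_n$ where the paper uses Weyl residues $M_n=\al_n^{-1}$, which is immaterial) --- but it closes the argument differently, and this is worth comparing. For condition 3 you use Sturm oscillation theory ($\sign S'(1,\rho_n)=(-1)^n$) and the AM--GM inequality; the paper instead derives $\sign M_n=(-1)^n\sign a\cdot\sign d(\rho_n)$ from \eqref{findvv}, \eqref{findMn}, and $\sign\dot r(\rho_n^2)=(-1)^n$, then invokes $M_n>0$: equivalent content. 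The substantive difference is the final consistency step. You match \emph{both} $\hat S'(1,\rho_n)=S'(1,\rho_n)$ (via the norming constants) \emph{and} $\hat\vv(1,\rho_n)=1/S'(1,\rho_n)$ (via the Wronskian), so $\hat d(\rho_n)=d(\rho_n)$ and $\hat H(\rho_n)=H(\rho_n)$ drop out simultaneously; this bypasses the paper's nine-case analysis of \eqref{dtd}, which arises because the paper matches only $\vv(1,\rho_n)$ and must then disentangle $d(\rho_n)$ and $\sigma_n$ from the single quantity $d(\rho_n)+\sigma_n\sqrt{d^2(\rho_n)-4}$. Conversely, to upgrade equality at the points $\rho_n$ to $\hat d\equiv d$, the paper cites completeness of $\{\sin\rho_n t\}_1^{\infty}$ in $L_2(0,1)$ applied to \eqref{intDtD}, while you divide by $S(1,\rho)$ and invoke a Phragm\'en--Lindel\"of/Liouville argument; these are two standard routes to the same rigidity, and the citation simply packages the estimates your route needs by hand (boundedness of the quotient despite the real zeros of $S(1,\rho)$, via maximum modulus on disks around those zeros, plus decay along the imaginary axis). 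Two details to make explicit in a full write-up: the signs are part of the spectral data, so you should record that $\hat\sigma_n=\sign\hat H(\rho_n)=\sigma_n$ (immediate from your own computation $\hat H(\rho_n)=H(\rho_n)$ together with condition 4); and the deferred asymptotics of $\al_n$ genuinely require condition 5, since with the opposite choice of signs $\vv(1,\rho_n)$ would acquire the leading constant $a^{-2}\ne 1$ and the classical characterization of Dirichlet data would fail --- this is exactly where condition 5 enters the paper's derivation of \eqref{asymptvvn}.
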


\begin{remark} \label{rem:omb}
By necessity, the constants $b$ and $\om$ from \eqref{intd} and \eqref{asymptrho}, respectively, are related to the parameters $(q, a, h)$ of the problem $\mathcal L$ as follows:
\begin{equation} \label{omb}
\om = \frac{1}{2} \int_0^1 q(x) \, dx, \quad b = a h + 2 a_+ \om.
\end{equation}
By sufficiency, $b$ and $\om$ can be any real numbers.
\end{remark}

Next, let us consider a small perturbation of spectral data, and formulate a local solvability and stability theorem. For this purpose, it will be convenient to slightly reformulate the inverse problem statement:

\begin{ip} \label{ip:2}
Let $a$ and $h$ be known a priori. Given the function $D(t)$ from the representation \eqref{intd}, $\{ \rho_n \}_1^{\infty}$, and $\{ \sigma_n \}_1^{\infty}$, find $q$.    
\end{ip}

In view of \eqref{asymptrho} and \eqref{omb}, the Hill-type discriminant $d(\rho)$ can be easily found via \eqref{intd} by using $a$, $h$, $D(t)$, and $\{ \rho_n \}_1^{\infty}$, so the solution of Inverse Problem~\ref{ip:2} is unique.

\begin{thm} \label{thm:loc}
Let $q \in L_{2,\mathbb R}(0,1)$, $a \in \mathbb R \setminus \{ -1, 0, 1 \}$, and $h \in \mathbb R$ be fixed, and let $D(t)$, $\{ \rho_n \}$, and $\{\sigma_n \}$ be the spectral data of $\mathcal L(q, a, h)$ in the sense of Inverse Problem~\ref{ip:2}. Then, there exists $\eps > 0$ (which depends on $q$, $a$, $h$) such that, for any $\tilde D \in L_{2,\mathbb R}(0,1)$ and any reals $\{ \tilde \rho_n^2 \}_1^{\infty}$ satisfying the inequalities
\begin{equation} \label{ineqDrho}
\| D - \tilde D \|_{L_2} \le \eps, \quad
\| \{ \rho_n - \tilde \rho_n \}_1^{\infty} \|_{l_2^1} \le \eps
\end{equation}
and the condition 
\begin{equation} \label{condtd}
\tilde d(\tilde \rho_n) = \pm 2 \quad \text{iff} \quad \sigma_n = 0,
\end{equation}
there exists a unique $\tilde q \in L_{2,\mathbb R}(0,1)$ such that $\tilde D(t)$, $\{ \tilde \rho_n \}_1^{\infty}$, and $\{ \sigma_n \}_1^{\infty}$ are the spectral data of the problem $\mathcal L(\tilde q, a, h)$. Moreover, the following estimate holds:
\begin{equation} \label{estq}
\| q - \tilde q \|_{L_2} \le C \left( \| D - \tilde D \|_{L_2} + \| \{ \rho_n - \tilde \rho_n \}_1^{\infty} \|_{l_2^1} \right),
\end{equation}
where the constant $C > 0$ depends on $q$, $a$, and $h$. Above, the function $\tilde d(\rho)$ is defined as
\begin{equation} \label{intdt}
\tilde d(\rho) := 2 a_+ \cos \rho + \frac{b \sin \rho}{\rho} + \frac{1}{\rho} \int_0^1 \tilde D(t) \sin \rho t \, dt,
\end{equation}
and $b$ is defined by \eqref{omb}.
\end{thm}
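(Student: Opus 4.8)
The plan is to reduce Inverse Problem~\ref{ip:2} to the classical inverse Sturm-Liouville problem with the Dirichlet boundary conditions and to invoke the local solvability and stability theorem of \cite{BSY13} recalled in Section~\ref{sec:class}. The starting point is algebraic. Since the Wronskian $\vv(1,\rho)S'(1,\rho) - \vv'(1,\rho)S(1,\rho) \equiv 1$ and $S(1,\rho_n) = 0$, every Dirichlet eigenvalue satisfies $\vv(1,\rho_n)S'(1,\rho_n) = 1$. Rewriting \eqref{defd} and \eqref{defH} as $a\vv(1,\rho) = \tfrac12(d + H)$ and $a^{-1}S'(1,\rho) = \tfrac12(d - H)$, I obtain $d(\rho_n)^2 - H(\rho_n)^2 = 4$, hence
\[
H(\rho_n) = \sigma_n\sqrt{d(\rho_n)^2 - 4}, \qquad S'(1,\rho_n) = \tfrac{a}{2}\bigl(d(\rho_n) - \sigma_n\sqrt{d(\rho_n)^2 - 4}\,\bigr).
\]
Condition~(3) of Theorem~\ref{thm:nsc} makes the radicand nonnegative, the Wronskian identity gives $S'(1,\rho_n)\ne 0$, and condition~(4) guarantees that $\sigma_n$ consistently fixes the branch of the square root. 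Thus $S'(1,\rho_n)$, equivalently $\vv(1,\rho_n) = 1/S'(1,\rho_n)$, is recovered from $d$ and $\{\sigma_n\}$ alone.

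I then recover the Dirichlet norming constants $\alpha_n = \int_0^1 S^2(x,\rho_n)\,dx$. Differentiating \eqref{eqv} in $\la$ and applying the Lagrange identity gives the standard relation $\alpha_n = \dot S(1,\rho_n)\,S'(1,\rho_n)$, in which the factor $\dot S(1,\rho_n)$ depends only on the eigenvalues: $S(1,\rho)$ is entire in $\la$ with simple zeros $\{\rho_n^2\}$ and a known leading term, so it is reconstructed from $\{\rho_n\}$ by its Hadamard product, from which $\dot S(1,\rho_n)$ is read off. Carrying out the same construction for the perturbed collection $\tilde D$, $\{\tilde\rho_n\}$, $\{\sigma_n\}$ — using $\tilde d$ from \eqref{intdt} in place of $d$ and the product over $\{\tilde\rho_n\}$ in place of the one over $\{\rho_n\}$ — produces candidate Dirichlet data $(\{\tilde\rho_n^2\}, \{\tilde\alpha_n\})$.

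Applying the Dirichlet local solvability and stability theorem to these data yields the unique $\tilde q \in L_{2,\mathbb R}(0,1)$ with Dirichlet spectral data $(\{\tilde\rho_n^2\}, \{\tilde\alpha_n\})$, together with an estimate $\|q - \tilde q\|_{L_2} \le C\bigl(\|\{\rho_n - \tilde\rho_n\}\|_{l_2^1} + \|\{\alpha_n - \tilde\alpha_n\}\|\bigr)$ in the appropriate sequence norm; it remains to bound the norming-constant term by the right-hand side of \eqref{estq}. I split $\alpha_n - \tilde\alpha_n$ into the contribution of $\dot S(1,\rho_n) - \dot{\tilde S}(1,\tilde\rho_n)$, controlled by $\|\{\rho_n - \tilde\rho_n\}\|_{l_2^1}$ through the stability of the product representation, and that of $S'(1,\rho_n) - \tilde S'(1,\tilde\rho_n)$. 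For the latter, the map $x \mapsto x - \sigma_n\sqrt{x^2 - 4}$ is Lipschitz on sets bounded away from $\pm 2$; since $|d(\rho_n)| \to 2|a_+| > 2$ and only finitely many indices have $\sigma_n \ne (-1)^n\sign a_-$, a small enough $\eps$ keeps $|\tilde d(\tilde\rho_n)|$ uniformly bounded away from $2$ whenever $\sigma_n \ne 0$, while for $\sigma_n = 0$ both $d(\rho_n)$ and $\tilde d(\tilde\rho_n)$ equal $\pm 2$ and the terms cancel. Here condition~\eqref{condtd} is exactly what licenses choosing the same branch for the perturbed data. Finally $|d(\rho_n) - \tilde d(\tilde\rho_n)|$ is estimated from \eqref{intd}, \eqref{intdt} in terms of $\|D - \tilde D\|_{L_2}$ and $|\rho_n - \tilde\rho_n|$; crucially, the finiteness of $\|\{\rho_n - \tilde\rho_n\}\|_{l_2^1}$ forces $n(\rho_n - \tilde\rho_n) \to 0$, so that $\{\tilde\rho_n\}$ obeys \eqref{asymptrho} with the \emph{same} $\om$, whence $b = \tilde b$ by \eqref{omb} and the two discriminants share their leading terms. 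I expect this stability analysis of the reduction map — the control of $\dot S(1,\rho_n) - \dot{\tilde S}(1,\tilde\rho_n)$ and the taming of the square-root nonlinearity near $d = \pm 2$ — to be the main obstacle.

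It remains to verify that $\tilde q$ reproduces the prescribed data. By construction $\{\tilde\rho_n^2\}$ is the Dirichlet spectrum of $\mathcal L(\tilde q, a, h)$, and inverting the relations of the first paragraph recovers $\tilde S'(1,\tilde\rho_n)$ and $\tilde\vv(1,\tilde\rho_n)$, so the actual discriminant $a\tilde\vv(1,\rho) + a^{-1}\tilde S'(1,\rho)$ agrees with $\tilde d(\rho)$ of \eqref{intdt} at every $\tilde\rho_n$ and has $\sign\tilde H(\tilde\rho_n) = \sigma_n$ there. Since $\om = \tilde\om$ makes the two functions share the leading terms $2a_+\cos\rho + b\sin\rho/\rho$, their difference equals $\rho^{-1}\int_0^1 g(t)\sin\rho t\,dt$ with $g \in L_{2,\mathbb R}(0,1)$ and vanishes at all $\tilde\rho_n$; as these are the zeros of $\tilde S(1,\rho)$ and hence a uniqueness set for $PW_-(0,1)$, a standard Paley-Wiener argument forces $g \equiv 0$. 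Therefore the discriminant of $\mathcal L(\tilde q, a, h)$ equals $\tilde d$, its integrand is $\tilde D$, and the signs are $\{\sigma_n\}$, which completes the reduction and establishes \eqref{estq}.
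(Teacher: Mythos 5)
Your route is essentially the paper's: the Wronskian identity at the Dirichlet eigenvalues, the branch formula $H(\rho_n)=\sigma_n\sqrt{d^2(\rho_n)-4}$, recovery of $\vv(1,\rho_n)$ and of the norming constants $\al_n=\dot r(\rho_n^2)\,S'(1,\rho_n)$ (which are exactly the reciprocals $M_n^{-1}$ of the Weyl residues the paper works with), reduction to the Dirichlet local solvability and stability theorem (Proposition~\ref{prop:loc}), and a final verification--via matching of $\om$ and $b$ and completeness of $\{\sin\tilde\rho_n t\}$--that the reconstructed $\tilde q$ really has the prescribed quasiperiodic data. That last verification is what the paper packages inside the sufficiency part of Theorem~\ref{thm:nsc}; your inlined version of it, including the observation that $\|\{\rho_n-\tilde\rho_n\}\|_{l_2^1}<\infty$ forces $n(\rho_n-\tilde\rho_n)\to 0$ and hence $\tilde\om=\om$, $\tilde b=b$, is correct.

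There is, however, one genuine gap, sitting exactly where you yourself flag ``the main obstacle'' and then pass over: the estimate of $|d(\rho_n)-\tilde d(\tilde\rho_n)|$ ``in terms of $\|D-\tilde D\|_{L_2}$ and $|\rho_n-\tilde\rho_n|$''. The only bound of that form obtainable from \eqref{intd}, \eqref{intdt} by Cauchy--Schwarz is $|d(\rho_n)-\tilde d(\rho_n)|\le C n^{-1}\|D-\tilde D\|_{L_2}$, and this is too weak. Indeed, since $M_n\asymp n^2$, it gives only
$$
|M_n-\tilde M_n|\le C\left( n^2|\rho_n-\tilde\rho_n| + n\,\|D-\tilde D\|_{L_2} + \|\{\rho_k-\tilde\rho_k\}\|_{l_2^1}\right),
$$
whereas both your existence step (checking the hypothesis of Proposition~\ref{prop:loc}) and the passage to \eqref{estq} require smallness of $\|\{n^{-2}(M_n-\tilde M_n)\}\|_{l_2^1}$, whose square is $\sum_n \bigl(n^{-1}|M_n-\tilde M_n|\bigr)^2$; plugging in the naive bound produces the divergent series $\sum_n \|D-\tilde D\|_{L_2}^2$, so no finite (let alone small) bound follows whenever $D\ne\tilde D$. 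The missing idea is the refinement constituting the paper's Lemma~\ref{lem:difdn}: using $\rho_n=\pi n+O(n^{-1})$, one splits
$$
d(\rho_n)-\tilde d(\rho_n) = \frac{1}{n}\,\hat D_n + O(n^{-2})\,\|D-\tilde D\|_{L_2},
$$
where $\hat D_n$ is the $n$-th sine Fourier-type coefficient of $D-\tilde D$, so that $\{\hat D_n\}\in l_2$ with $\|\{\hat D_n\}\|_{l_2}\le C\|D-\tilde D\|_{L_2}$ by Bessel's inequality. Only with this extra $l_2$ structure do the weighted sums close, yielding \eqref{difMn} and hence \eqref{estq}. With that lemma supplied, your argument becomes essentially the paper's proof; without it, the reduction does not go through.
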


There exists a wide class of spectral data perturbations that simultaneously satisfy the requirements \eqref{ineqDrho} and \eqref{condtd}, which is shown in Remark~\ref{rem:loc} below. 

Next, let us define a set of spectral data, on which the uniform stability of the inverse problem will be considered.

\begin{defin} \label{def:S}
Let $a$, $h$, $\om$, and $\{ \sigma_n \}_1^{\infty}$ be fixed so that $\sigma_n \in \{ -1, 0, 1 \}$ and condition~5 of Theorem~\ref{thm:nsc} is fulfilled. Let $\Omega, \de > 0$. Denote by $\mathcal S_{\Omega,\de} = \mathcal S_{\Omega, \de}\bigl(a, h, \om, \{ \sigma_n \}\bigr)$ the set of pairs $\bigl(D, \{ \rho_n \}_1^{\infty}\bigr)$ such that $D \in L_{2,\mathbb R}(0,1)$,
$\| D \|_{L_2} \le \Omega$, the numbers $\rho_n$ are real and satisfy the asymptotics \eqref{asymptrho} with the remainders $\| \{ \varkappa_n \}_1^{\infty} \|_{l_2} \le \Omega$, and the following relations hold:
\begin{gather} \label{sep}
\rho_1 \ge 1, \quad \rho_{n+1} - \rho_n \ge \de, \quad n \ge 1, \\ \label{casede}
\begin{cases}
(-1)^n \sign a \cdot d(\rho_n) \ge 2 + \de & \text{if} \:\: \sigma_n \ne 0, \\ 
(-1)^n \sign a \cdot d(\rho_n) = 2  & \text{if} \:\: \sigma_n = 0,
\end{cases}
\end{gather}
where $d(\rho)$ is given by formula \eqref{intd} and $b$ is defined in \eqref{omb} by using the fixed $a$, $h$, and $\om$.
\end{defin}

Note that, by virtue of Theorem~\ref{thm:nsc}, any collection $\left( D(t), \{ \rho_n \}_1^{\infty}\right)$ with $\left( D(t), \{ \rho_n \}_1^{\infty}, \{ \sigma_n\}_1^{\infty}\right) \in \mathcal S_{\Omega,\de}$ is the spectral data of the problem $\mathcal L(q, a, h)$ for some $q \in L_{2,\mathbb R}(0,1)$ and $a$, $h$, $\{ \sigma_n \}_1^{\infty}$ being fixed by Definition~\ref{def:S}. The following theorem asserts the unconditional uniform stability of Inverse Problem~\ref{ip:2} on the set $\mathcal S_{\Omega, \de}$.

\begin{thm} \label{thm:uni}
Suppose that the values $a \in \mathbb R \setminus \{ -1, 0, 1 \}$, $h, \om \in \mathbb R$, $\sigma_n \in \{ -1, 0, 1 \}$ $(n \ge 1)$, $\Omega, \de > 0$ are fixed and $\sigma_n = (-1)^n \sign a_-$ for all sufficiently large $n$. Then, for any two pairs $\bigl( D(t), \{ \rho_n \}_1^{\infty}\bigr)$ and $\bigl( \tilde D(t), \{ \tilde \rho_n \}_1^{\infty}\bigr)$ in $\mathcal S_{\Omega, \de}$, the corresponding potentials $q$ and $\tilde q$ that solve Inverse Problem~\ref{ip:2} by the spectral data $\bigl( D(t), \{ \rho_n \}_1^{\infty}, \{ \sigma_n \}_1^{\infty}\bigr)$ and $\bigl( \tilde D(t), \{ \tilde \rho_n \}_1^{\infty}, \{ \sigma_n \}_1^{\infty}\bigr)$, respectively, satisfy the estimate \eqref{estq}, where the constant $C$ depends only on $a$, $h$, $\om$, $\{ \sigma_n \}_{n \ge 1}$, $\Omega$, and $\de$.
\end{thm}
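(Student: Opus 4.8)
The plan is to reduce Inverse Problem~\ref{ip:2}, restricted to $\mathcal S_{\Omega,\de}$, to the classical inverse problem with Dirichlet boundary conditions and then to transfer the uniform stability estimate of Savchuk and Shkalikov \cite{SS10, SS13}. Since the Dirichlet eigenvalues $\{\rho_n\}$ are already part of our data, everything reduces to recovering the Dirichlet weight numbers (norming constants) from $\bigl(D, \{\rho_n\}, \{\sigma_n\}\bigr)$ by a map whose modulus of continuity is controlled solely by $a, h, \om, \{\sigma_n\}, \Omega, \de$. The algebraic bridge is the Wronskian identity $W[\vv, S] \equiv 1$, which at a Dirichlet eigenvalue (where $S(1,\rho_n) = 0$) gives $\vv(1,\rho_n) S'(1,\rho_n) = 1$. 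Setting $u_n := a\vv(1,\rho_n)$, $v_n := a^{-1} S'(1,\rho_n)$, so that $u_n v_n = 1$, formulas \eqref{defd} and \eqref{defH} yield
\[
d(\rho_n)^2 - H(\rho_n)^2 = (u_n + v_n)^2 - (u_n - v_n)^2 = 4 u_n v_n = 4 ,
\]
and combining this with $\sigma_n = \sign H(\rho_n)$ produces the explicit formula $H(\rho_n) = \sigma_n \sqrt{d(\rho_n)^2 - 4}$. Hence $\vv(1,\rho_n) = a^{-1}\tfrac12\bigl(d(\rho_n) + H(\rho_n)\bigr)$ and $S'(1,\rho_n) = a\,\tfrac12\bigl(d(\rho_n) - H(\rho_n)\bigr)$ are recovered, $d(\rho_n)$ itself being computed from $D$, $a$, $h$, $\om$ through \eqref{intd}.

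First I would convert these values into the Dirichlet norming constants $\al_n := \int_0^1 S(x,\rho_n)^2\, dx$. Differentiating \eqref{eqv} for $S$ with respect to $\la$ and integrating over $(0,1)$ gives the standard identity $\al_n = - S'(1,\rho_n)\,\dot S(1,\rho_n)$, in which the factor $\dot S(1,\rho_n)$ depends only on the sequence $\{\rho_n\}$, through the product representation of the entire function $\la \mapsto S(1,\la)$ whose simple zeros are exactly $\{\rho_n^2\}$. Thus $\bigl(D, \{\rho_n\}, \{\sigma_n\}\bigr)$ determines the Dirichlet spectral data $\{\rho_n^2, \al_n\}$, which determine $q$ by the classical theory underlying \cite{BSY13}. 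The asymptotics $\vv(1,\rho_n), S'(1,\rho_n) = (-1)^n + o(1)$ give $H(\rho_n) = (-1)^n 2 a_- + o(1)$, consistent with condition~5 of Theorem~\ref{thm:nsc} and placing $\{\al_n\}$ in the correct asymptotic regime for \cite{SS10, SS13}.

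The heart of the matter is the uniformity of these maps over $\mathcal S_{\Omega,\de}$. For two pairs in $\mathcal S_{\Omega,\de}$ with the same $\{\sigma_n\}$, I would estimate $d(\rho_n) - \tilde d(\tilde\rho_n) = \bigl[d(\rho_n) - \tilde d(\rho_n)\bigr] + \bigl[\tilde d(\rho_n) - \tilde d(\tilde\rho_n)\bigr]$: the first bracket equals $\rho_n^{-1}\int_0^1 (D - \tilde D)\sin\rho_n t\, dt$ (the $2a_+\cos\rho$ and $b\rho^{-1}\sin\rho$ terms cancel, as $a,h,\om$ are fixed) and is controlled in $l_2$ over $n$ by $C\|D - \tilde D\|_{L_2}$ via a Bessel-type (Riesz basis) inequality made legitimate by the asymptotics \eqref{asymptrho}, while the second is handled by the mean value theorem and $|\rho_n - \tilde\rho_n|$, with derivative bounds uniform thanks to $\|D\|_{L_2}, \|\tilde D\|_{L_2} \le \Omega$ and $\|\{\varkappa_n\}\| \le \Omega$. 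The separation \eqref{sep} together with the $\Omega$-bounds yields uniform two-sided control of $\dot S(1,\rho_n)$ and $S'(1,\rho_n)$, so that the passage $d(\rho_n)\mapsto\al_n$ becomes uniformly Lipschitz in the appropriate weighted sequence norm; since the image of $\mathcal S_{\Omega,\de}$ then lies in a single Savchuk-Shkalikov uniformity class (bounded potential norm, eigenvalue separation \eqref{sep}, controlled norming constants), the estimate of \cite{SS10, SS13} applies with one constant and delivers \eqref{estq} with $C = C(a, h, \om, \{\sigma_n\}, \Omega, \de)$.

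The main obstacle is the non-Lipschitz character of $d \mapsto \sqrt{d^2 - 4}$ near $d = \pm 2$, i.e. exactly at the indices with $\sigma_n = 0$, and this is precisely what condition \eqref{casede} is built to neutralize. For $\sigma_n \ne 0$ it forces $|d(\rho_n)|, |\tilde d(\tilde\rho_n)| \ge 2 + \de$ with the common sign $(-1)^n\sign a$, so the whole segment joining the two arguments stays in $\{|s| \ge 2+\de\}$ and the square root is Lipschitz there with constant $\tfrac{2+\de}{\sqrt{4\de + \de^2}}$, depending only on $\de$; for $\sigma_n = 0$ it forces $d(\rho_n) = \tilde d(\tilde\rho_n) = (-1)^n\sign a \cdot 2$, whence $H(\rho_n) = \tilde H(\tilde\rho_n) = 0$ and that index drops out of the difference. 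Verifying that these per-index estimates assemble into the norm demanded by \cite{SS10, SS13}, and that the reduction genuinely lands $\mathcal S_{\Omega,\de}$ inside a single uniformity class, is the most delicate part of the argument.
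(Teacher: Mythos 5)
Your proposal follows essentially the same route as the paper's proof: construct the Dirichlet spectral data from $\bigl(D,\{\rho_n\},\{\sigma_n\}\bigr)$ via the Wronskian identity and $H(\rho_n)=\sigma_n\sqrt{d^2(\rho_n)-4}$, use \eqref{casede} to neutralize the square-root singularity near $d=\pm 2$, show that the separation \eqref{sep} and the $\Omega$-bounds place the image of $\mathcal S_{\Omega,\de}$ in a single Savchuk--Shkalikov uniformity class (the paper cites \cite[Lemma~5.4]{Hryn11} for the two-sided bound $c n^{-2} \le |\dot r(\rho_n^2)| \le C n^{-2}$ that you assert without reference), and then invoke the uniform stability of the classical Dirichlet inverse problem, exactly as in the paper's reduction to Proposition~\ref{prop:uni}. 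The only blemish is a sign slip in your norming-constant identity: it should read $\al_n = +S'(1,\rho_n)\,\dot S(1,\rho_n)$ (dot denoting $d/d\la$), since with your minus sign the asymptotics $S'(1,\rho_n)\approx(-1)^n$ and $\dot S(1,\rho_n)\approx(-1)^n/(2(\pi n)^2)$ would make $\al_n$ negative, contradicting the positivity required for the Dirichlet spectral data.
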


\section{Classical inverse Sturm-Liouville problem} \label{sec:class}

In this section, we provide the known results from \cite{BSY13, SS13} on the inverse Sturm-Liouville problem with the Dirichlet boundary conditions. These results will be used to prove the main theorems in the next sections.

Denote by $\mathcal L_0 = \mathcal L_0(q)$ the eigenvalue problem for equation \eqref{eqv} with the potential $q \in L_{2,\mathbb R}(0,1)$ and with the Dirichlet boundary conditions $y(0) = y(1) = 0$.
Denote by $C(x, \rho)$ the solution of equation \eqref{eqv} under the initial conditions $C(0,\rho) = 1$, $C'(0,\rho) = 0$.
The Weyl function of $\mathcal L_0$ is defined as $M(\rho^2) = \dfrac{C(1,\rho)}{S(1,\rho)}$. The function $M(\la)$ is meromorphic in $\la = \rho^2$ and has the simple poles at the eigenvalues $\{ \rho_n^2 \}_1^{\infty}$ of $\mathcal L_0$. Denote
\begin{equation} \label{defMn}
M_n := \Res_{\la = \rho_n^2} M(\la), \quad n \ge 1.
\end{equation}

We call $\{ M_n \}_1^{\infty}$ the Weyl sequence and
$\{ \rho_n, M_n \}_1^{\infty}$, the spectral data of the problem $\mathcal L_0$ or the Dirichlet spectral data.

Consider the classical inverse Sturm-Liouville problem:

\begin{ip} \label{ip:dir}
Given $\{ \rho_n, M_n \}_1^{\infty}$, find $q$.
\end{ip}

Note that $\{ S(x, \rho_n) \}_1^{\infty}$ are the eigenfunctions of $\mathcal L_0$ and 
$$
M_n = \al_n^{-1}, \quad \al_n := \int_0^1 S^2(x, \rho_n) \, dx, \quad n \ge 1.
$$

Thus, Inverse Problem~\ref{ip:dir} is equivalent to the recovery of the potential $q$ from the eigenvalues $\{ \rho_n^2 \}_1^{\infty}$ and the weight numbers $\{ \al_n \}_1^{\infty}$. For our technique, it will be more convenient to use the Weyl sequence $\{ M_n \}_1^{\infty}$, so in this section we formulate the results in terms of the spectral data $\{ \rho_n, M_n \}_1^{\infty}$.

The solution of Inverse Problem~\ref{ip:dir} is unique and can be found by standard methods of inverse spectral theory, in particular, by the Gelfand-Levitan method \cite{GL51, Lev84} and by the method of spectral mappings \cite{FY01}. The characteristic properties of the spectral data are described by the following proposition.

\begin{prop}[\hspace*{-3pt}\cite{BSY13}, Theorem 7] \label{prop:nsc} 
For numbers $\{ \rho_n, M_n \}_1^{\infty}$ to be the spectral data of a problem $\mathcal L_0$ with $q \in L_{2,\mathbb R}(0,1)$, the following conditions are necessary and sufficient:
$$
\rho_n^2 \in \mathbb R, \quad \rho_n^2 \ne \rho_m^2 \: (n \ne m), \quad M_n > 0, \quad n \ge 1,
$$
the values $\{ \rho_n \}$ fulfill the asymptotics \eqref{asymptrho} and
\begin{equation} \label{asymptM}
M_n = 2 (\pi n)^2 \left( 1 + \frac{\eta_n}{n} \right), \quad \{ \eta_n \} \in l_2, \quad n \ge 1.
\end{equation}
\end{prop}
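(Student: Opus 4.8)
The plan is to prove necessity and sufficiency separately, relying on the transformation-operator representation for necessity and on the Gelfand--Levitan (equivalently, spectral-mappings) machinery for sufficiency, always comparing the unknown problem against a model potential.

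For necessity, suppose $\{ \rho_n, M_n \}_1^{\infty}$ are the spectral data of $\mathcal L_0(q)$ with $q \in L_{2,\mathbb R}(0,1)$. Self-adjointness of the Dirichlet problem immediately gives $\rho_n^2 \in \mathbb R$ and $\rho_n^2 \ne \rho_m^2$ for $n \ne m$, while $M_n = \al_n^{-1}$ with $\al_n = \int_0^1 S^2(x,\rho_n)\,dx > 0$ yields $M_n > 0$. The asymptotics follow from the transformation-operator formula
\[
S(x,\rho) = \frac{\sin \rho x}{\rho} + \frac{1}{\rho} \int_0^x K(x,t) \sin \rho t \, dt, \qquad K(x,x) = \tfrac12 \int_0^x q(s)\, ds,
\]
with $K(x,\cdot) \in L_2(0,x)$. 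Setting $x = 1$ produces an expansion of the characteristic function $S(1,\rho)$ whose zeros are the $\rho_n$; a standard Rouch\'e-type count together with the Paley--Wiener estimate on the integral term gives \eqref{asymptrho} with $\om = \tfrac12 \int_0^1 q$ and $\{ \varkappa_n \} \in l_2$. Differentiating $S(1,\rho)$ and substituting into the residue formula $M_n = C(1,\rho_n)/\dot S(1,\rho_n)$ (equivalently, computing $\al_n$) yields \eqref{asymptM}, the $l_2$-membership of $\{ \eta_n \}$ being a consequence of $q \in L_2$ via Bessel's inequality for the Fourier-type coefficients of $q$.

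For sufficiency, I would fix a model problem $\tilde{\mathcal L}_0(\tilde q)$, say $\tilde q = 0$, whose explicit data $\{ \tilde\rho_n, \tilde M_n \}$ obey the same asymptotics \eqref{asymptrho}--\eqref{asymptM}, and then form the main equation of the inverse problem from the data differences. This gives an equation $(I + \tilde R(x))\psi(x) = \tilde\psi(x)$ in a suitable weighted sequence space, where the kernel $\tilde R(x)$ is built from $\{ \rho_n, M_n, \tilde\rho_n, \tilde M_n \}$. The conditions \eqref{asymptrho}, \eqref{asymptM} guarantee that $\tilde R(x)$ is compact and continuous in $x$, while the positivity $M_n > 0$ together with the simplicity of the $\rho_n$ makes the associated quadratic form positive definite, so that $I + \tilde R(x)$ is boundedly invertible for every $x \in [0,1]$. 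Solving the main equation, recovering $q(x)$ from the diagonal of the reconstructed kernel, and using that the $l_2$ remainders in \eqref{asymptrho}--\eqref{asymptM} are exactly what forces $q \in L_{2,\mathbb R}(0,1)$ (reality of $q$ coming from reality of the data), one obtains a candidate potential. Finally, a contour-integral (residue) argument verifies that the Weyl function of the constructed $\mathcal L_0(q)$ has precisely the poles $\rho_n^2$ with residues $M_n$.

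The main obstacle is the sufficiency step: establishing bounded invertibility of $I + \tilde R(x)$ uniformly in $x$ and, simultaneously, the sharp regularity $q \in L_2$ from only $l_2$-type remainders. The hypotheses $M_n > 0$ and the asymptotic separation of the $\rho_n$ are precisely what render the quadratic form behind the main equation coercive, and the delicate bookkeeping is to track the error terms carefully enough that the reconstructed potential lands in $L_2$ rather than in a weaker class.
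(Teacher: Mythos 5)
This proposition is not proved in the paper at all: it is quoted directly from \cite{BSY13} (Theorem 7), with the surrounding text merely indicating alternative sources (\cite{MO75}, \cite{FY01}, \cite{PT87}) from which it could be deduced. Your sketch correctly reproduces the standard argument used in those references --- transformation operators plus a Rouch\'e-type count for necessity, and the Gelfand--Levitan/spectral-mappings main equation for sufficiency, with the positivity $M_n > 0$ ensuring unique solvability of the main equation via the Fredholm alternative --- so it follows essentially the same route as the cited proof rather than a genuinely different one.
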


The next proposition gives the local solvability and stability of Inverse Problem~\ref{ip:dir}.

\begin{prop}[\hspace*{-3pt}\cite{BSY13}, Theorem 8] \label{prop:loc}
Let $q \in L_{2,\mathbb R}(0,1)$ be fixed, and let $\{ \rho_n, M_n \}_1^{\infty}$ be the spectral data of $\mathcal L_0(q)$. Then, there exists $\eps > 0$ (depending on $q$) such that, for any real numbers $\{ \tilde \rho_n^2, \tilde M_n \}_1^{\infty}$ satisfying
$$
\| \{ \rho_n - \tilde \rho_n \}_1^{\infty} \|_{l_2^1} \le \eps, \quad
\| \{ n^{-2} (M_n - \tilde M_n) \}_1^{\infty} \|_{l_2^1} \le \eps,
$$
there exists a unique $\tilde q \in L_{2,\mathbb R}(0,1)$, for which $\{ \tilde \rho_n, \tilde M_n \}_1^{\infty}$ are the spectral data of $\mathcal L_0(\tilde q)$. Moreover, the following estimate holds:
\begin{equation} \label{estqdir}
\| q - \tilde q \|_{L_2} \le C \left( \| \{ \rho_n - \tilde \rho_n \}_1^{\infty} \|_{l_2^1} + \| \{ n^{-2} (M_n - \tilde M_n) \}_1^{\infty} \|_{l_2^1} \right),
\end{equation}
where the constant $C$ depends only on $q$.
\end{prop}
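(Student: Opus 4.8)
The plan is to prove Proposition~\ref{prop:loc} by the method of spectral mappings, taking the fixed problem $\mathcal L_0(q)$ as the model and treating the perturbed numbers $\{\tilde\rho_n, \tilde M_n\}$ as a small variation of the data $\{\rho_n, M_n\}$ of $\mathcal L_0(q)$. First I would encode both data sets into Weyl functions $M(\la)$ and $\tilde M(\la)$ with residues $M_n$ and $\tilde M_n$ at the poles $\rho_n^2$, $\tilde\rho_n^2$, and consider the contour integral of a Weyl-type solution over expanding circles enclosing these poles. Computing residues produces the spectral-data differences $\rho_n - \tilde\rho_n$ and $M_n - \tilde M_n$ and yields, for each fixed $x \in [0,1]$, a linear main equation of the form $(I + \tilde H(x))\,\tilde\psi(x) = \psi(x)$ in a Banach space $B$ of bounded doubly-indexed sequences. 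Here the components of $\psi(x)$ are built from $S(x,\rho_n)$ and its $\rho$-derivative, and the operator $\tilde H(x)$ is assembled from products $S(x,\rho_n)S(x,\rho_m)$ weighted by the data differences. (The Gelfand--Levitan integral equation would serve as an equivalent alternative, with $\tilde q(x)$ recovered from the diagonal of its kernel.)

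The central analytic step is to show that $\tilde H(x)$ is bounded on $B$, uniformly in $x \in [0,1]$, with
$$
\|\tilde H(x)\|_{B \to B} \le C\left( \|\{\rho_n - \tilde\rho_n\}\|_{l_2^1} + \|\{n^{-2}(M_n - \tilde M_n)\}\|_{l_2^1}\right).
$$
For this I would use the standard asymptotics $S(x,\rho_n) = \rho_n^{-1}\sin\rho_n x + (\text{lower order})$ together with \eqref{asymptrho} and \eqref{asymptM} to control the matrix entries; the weights $n$ and $n^{-2}$ in the two $l_2^1$-norms are exactly what compensate the growth $\rho_n \sim \pi n$ and $M_n \sim 2(\pi n)^2$. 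Since $\tilde H(x) = 0$ when the two data sets coincide, this norm estimate and continuity give $\|\tilde H(x)\| \le \tfrac12$ once $\eps$ is chosen small enough, so $I + \tilde H(x)$ is boundedly invertible with $\|(I + \tilde H(x))^{-1}\| \le 2$ uniformly in $x$. Solving the main equation then yields a unique $\tilde\psi(x)$.

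It remains to recover $\tilde q$ and to verify that $\{\tilde\rho_n, \tilde M_n\}$ are genuinely its Dirichlet spectral data. The potential is obtained from $\tilde\psi$ through a reconstruction formula of the type $\tilde q(x) - q(x) = -2\,\frac{d}{dx}\varepsilon_0(x)$, where $\varepsilon_0(x)$ is a series in the components of $\tilde\psi(x)$ and the data differences. Estimating $\varepsilon_0$ and its derivative in $L_2(0,1)$ by combining the uniform bound on $(I + \tilde H(x))^{-1}$ with the norm estimate for $\tilde H(x)$ transfers the smallness of the data differences to a bound of the same order on $\|q - \tilde q\|_{L_2}$, which is precisely \eqref{estqdir}, with $C$ depending only on $q$ through the model quantities $S(x,\rho_n)$, $\al_n$, and their asymptotics. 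The fact that $\{\tilde\rho_n, \tilde M_n\}$ are the spectral data of $\mathcal L_0(\tilde q)$ follows from a standard verification lemma of the spectral-mappings method; note also that $\tilde M_n > 0$ automatically, since $M_n > 0$ with $M_n \to +\infty$, so for small $\eps$ the closeness keeps every $\tilde M_n$ positive and Proposition~\ref{prop:nsc} applies.

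The main obstacle is the uniform-in-$x$ operator-norm estimate for $\tilde H(x)$ on $B$ and the companion $L_2$-estimate for the reconstruction series: both require delicate bilinear bounds on the products $S(x,\rho_n)S(x,\rho_m)$ and careful bookkeeping of the weights so that each of the two spectral-data differences enters the final estimate \emph{linearly}. Once the correct Banach space and these bilinear bounds are established, invertibility of $I + \tilde H(x)$ by perturbation and the stability estimate are routine.
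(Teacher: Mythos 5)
Your proposal is correct in substance, and it is essentially the proof given in the cited source: the paper itself does not prove this proposition but imports it as Theorem~8 of \cite{BSY13}, which is established exactly by the method of spectral mappings you outline (contour-integral reduction to a linear main equation in a Banach space of bounded sequences, an operator-norm bound for $\tilde H(x)$ that is linear in the two weighted data differences, Neumann-series invertibility of $I + \tilde H(x)$ for small $\eps$, reconstruction of $\tilde q$ from the solution, and the standard verification lemma). The bilinear estimates you defer are precisely the technical core carried out in \cite{BSY13}, so your plan reconstructs the same route rather than an alternative one.
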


For convenience, Propositions~\ref{prop:nsc} and~\ref{prop:loc} refer to the paper \cite{BSY13}, which is focused on the Dirichlet boundary conditions. However, these results can be deduced from earlier literature. In particular, Marchenko and Ostrovskii \cite{MO75} for the first time have obtained the criterion for two spectra of the Sturm-Liouville equation \eqref{eqv} with real-valued square-integrable potential (see also \cite[Theorem~3.4.1]{Mar77}). That criterion can be used to get Proposition~\ref{prop:nsc}. 
In the monograph by Freiling and Yurko \cite{FY01}, the results similar to Propositions~\ref{prop:nsc} and~\ref{prop:loc} are discussed in detail for the Robin boundary conditions. P\"oschel and Trubowitz \cite{PT87} studied the inverse Sturm-Liouville problem using spectral data $\{ \rho_n^2, \log S'(1,\rho_n) \}_1^{\infty}$, which can be uniquely determined by $\{ \rho_n, M_n \}_1^{\infty}$ and vice versa. The real-analytic isomorphism that was constructed in \cite{PT87} implies a local theorem analogous to Proposition~\ref{prop:loc}.

The uniform stability of the inverse Sturm-Liouville problems has been proved by Savchuk and Shkalikov \cite{SS10, SS13} for potentials in the scale of the Sobolev spaces $W_2^{\al}[0,1]$, $\al > -1$. For convenience, we reformulate their result in terms of the spectral data $\{ \rho_n, M_n \}_1^{\infty}$ for $q \in L_2(0,1)$ and a fixed $\omega$.

\begin{defin} \label{def:S0}
Fix $\om \in \mathbb R$.
For $\Omega > 0$ and $\de > 0$, introduce the set $\mathcal S_{\Omega,\de}^0 = S_{\Omega,\de}^0(\om)$ of the real sequences $\{ \rho_n, M_n \}_1^{\infty}$ satisfying the asymptotics \eqref{asymptrho} and \eqref{asymptM} with the fixed $\om$, the estimates $\| \{ \varkappa_n \}_1^{\infty} \|_{l_2} \le \Omega$ and $\| \{ \eta_n \}_1^{\infty} \|_{l_2} \le \Omega$ for the remainders, the inequalities \eqref{sep} and $\de < M_n < \Omega$, $n \ge 1$.
\end{defin}

According to Proposition~\ref{prop:nsc} and Definition~\ref{def:S0}, every sequence $\{ \rho_n, M_n \}_1^{\infty}$ of $\mathcal S_{\Omega, \de}^0$ is the spectral data of some problem $\mathcal L_0$.

\begin{prop}[\hspace*{-3pt}\cite{SS13}, Theorem 3.15] \label{prop:uni}
Let $\om \in \mathbb R$, $\Omega > 0$, and $\de > 0$ be fixed. Then, for any $\{ \rho_n, M_n \}_1^{\infty}$ and $\{ \tilde \rho_n, \tilde M_n \}_1^{\infty}$ in $\mathcal S_{\Omega, \de}^0$, the respective solutions $q$ and $\tilde q$ of Inverse Problem~\ref{ip:dir} satisfy the estimate \eqref{estqdir}, where the constant $C$ depends only on $\omega$, $\Omega$, and $\de$.
\end{prop}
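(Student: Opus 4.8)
The plan is to obtain Proposition~\ref{prop:uni} as a reformulation of the Savchuk--Shkalikov uniform stability theorem \cite{SS13}, transcribed into the spectral data $\{\rho_n, M_n\}_1^{\infty}$ and the set $\mathcal S_{\Omega,\de}^0$. The starting point is the standard dictionary between the Weyl-sequence data and the eigenfunction norming constants: since $M_n = \al_n^{-1}$ with $\al_n = \int_0^1 S^2(x,\rho_n)\,dx$, the pair $\{\rho_n, M_n\}$ is in bijection with the classical Dirichlet data $\{\rho_n^2, \al_n\}$, and by Proposition~\ref{prop:nsc} every sequence in $\mathcal S_{\Omega,\de}^0$ is the spectral data of a problem $\mathcal L_0(q)$ with $q \in L_{2,\mathbb R}(0,1)$. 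The theorem of \cite{SS13} is stated in the Sobolev scale $W_2^{\al}[0,1]$, $\al > -1$, in its own regularized spectral-data parametrization; what must be checked is that, after specializing to $\al = 0$, this parametrization is uniformly bi-Lipschitz equivalent, over $\mathcal S_{\Omega,\de}^0$, to $\{\rho_n, M_n\}$ with the weighted norms appearing in \eqref{estqdir}.

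First I would set up the data correspondence. The eigenvalue component transfers directly: the asymptotics \eqref{asymptrho} with the fixed $\om$ fixes the leading correction term, and $\| \{ \rho_n - \tilde\rho_n \}_1^{\infty} \|_{l_2^1}$ is the natural metric for this component in both parametrizations. For the norming-constant component I would use the normalized constants $\eta_n$ from \eqref{asymptM}. Since the leading term $2(\pi n)^2$ there is index-based rather than built from $\rho_n$, one has $M_n - \tilde M_n = 2\pi^2 n (\eta_n - \tilde\eta_n)$, whence $\| \{ n^{-2}(M_n - \tilde M_n) \}_1^{\infty} \|_{l_2^1} = 2\pi^2 \| \{ \eta_n - \tilde\eta_n \}_1^{\infty} \|_{l_2}$; thus the weighted $M_n$-norm in \eqref{estqdir} is exactly the $l_2$-metric on the normalized norming sequences. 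The passage between these normalized constants and the regularized spectral data of \cite{SS13} is a smooth map with bounded derivatives on the fixed range to which Definition~\ref{def:S0} confines the sequences, so the comparison constants become absolute once $\Omega$ and $\de$ are fixed.

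Next I would verify the set inclusion and transfer the estimate. Every $\{\rho_n, M_n\} \in \mathcal S_{\Omega,\de}^0$ lies in a uniform-stability class of \cite{SS13} whose parameters depend only on $\om$, $\Omega$, and $\de$: the remainder bounds $\| \{\varkappa_n\}_1^{\infty}\|_{l_2} \le \Omega$ and $\| \{\eta_n\}_1^{\infty}\|_{l_2} \le \Omega$ give the required ball in the regularized-data space; the separation \eqref{sep} supplies the uniform spectral gap that \cite{SS13} needs to keep the Lipschitz constant uniform; and the bounds on the norming constants in Definition~\ref{def:S0} keep the normalized constants away from degeneracy. Applying the estimate of \cite{SS13} to the images of $\{\rho_n, M_n\}$ and $\{\tilde\rho_n, \tilde M_n\}$, and translating it back through the bi-Lipschitz dictionary, then yields \eqref{estqdir} with a constant $C$ depending only on $\om$, $\Omega$, and $\de$.

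I expect the main obstacle to be precisely this uniform norm-equivalence for the norming-constant component. Because the two parametrizations are related through a nonlinear transformation of the norming constants, the equivalence of the native norm of \cite{SS13} with the weighted norm of \eqref{estqdir} is a Lipschitz comparison rather than an identity, and its constants must be shown to depend only on $\om$, $\Omega$, and $\de$, not on the individual sequences. This is exactly where the bounds of Definition~\ref{def:S0}, the remainder bound $\| \{\eta_n\}_1^{\infty}\|_{l_2} \le \Omega$, and the asymptotics \eqref{asymptM} combine: they pin the normalized norming constants into a fixed compact set on which the transformation and its inverse are Lipschitz with controlled constants. The companion paper \cite{SS10} supplies the $L_2$-case version of this equivalence that is needed here.
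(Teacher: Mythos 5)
The paper gives no proof of Proposition~\ref{prop:uni}: it is quoted directly from \cite{SS13} (Theorem 3.15), with the reformulation in terms of $\{\rho_n, M_n\}_1^{\infty}$ and the set $\mathcal S_{\Omega,\de}^0$ left implicit. Your proposal correctly reconstructs exactly that implicit translation --- the bijection $M_n = \al_n^{-1}$ with the classical norming constants, the identity $\| \{ n^{-2}(M_n - \tilde M_n)\}_1^{\infty} \|_{l_2^1} = 2\pi^2 \| \{ \eta_n - \tilde \eta_n \}_1^{\infty}\|_{l_2}$ tying the norm in \eqref{estqdir} to the normalized remainders of \eqref{asymptM}, and the uniform (over $\mathcal S_{\Omega,\de}^0$) bi-Lipschitz passage to the Savchuk--Shkalikov parametrization --- so it takes essentially the same route as the paper.
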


\begin{remark}
The Hill-type discriminant for the Sturm-Liouville problem with the general boundary conditions of type~\ref{type1} has the form $d(\rho) = w \vv(1,\rho) + \overline{w}^{-1} S'(1,\rho)$. It is different from \eqref{defd} for $a = |w|$ only by a constant multiplier $\dfrac{w}{|w|}$. Therefore, without loss of generality, we consider the boundary conditions \eqref{bc} but not \eqref{type1}.
\end{remark}

\section{Reconstruction algorithm} \label{sec:alg}

In this section, we describe the constructive solution of Inverse Problem~\ref{ip:1} that has been obtained in \cite{Yur12-tamk, Yur12-amp, Yur16} for quasiperiodic problems of more general form than \eqref{eqv}--\eqref{bc}. The main idea consists in the reduction of Inverse Problem~\ref{ip:1} to Inverse Problem~\ref{ip:dir}.

Using \eqref{defd} and \eqref{defH}, we get
\begin{align} \label{relvv}
& \vv(1,\rho) = \frac{1}{2a} (d(\rho) + H(\rho)), \\ \label{relSp}
& S'(1,\rho) = \frac{a}{2} (d(\rho) - H(\rho)).
\end{align}

Note that
\begin{equation} \label{wron}
\vv(x, \rho) S'(x, \rho) - \vv'(x, \rho) S(x, \rho) \equiv 1.
\end{equation}

Substituting \eqref{relvv} and \eqref{relSp} into \eqref{wron} for $x = 1$ and using the relation $S(1,\rho_n) = 0$ for the Dirichlet eigenvalues $\{ \rho_n^2 \}_1^{\infty}$, we obtain
\begin{equation} \label{Hd2}
H^2(\rho_n) = d^2(\rho_n) - 4, \quad n \ge 1.
\end{equation}
Recall that $\sigma_n = \sign H(\rho_n)$, so
\begin{equation} \label{findH}
H(\rho_n) = \sigma_n \sqrt{d^2(\rho_n) - 4}, \quad n \ge 1.
\end{equation}

Relations \eqref{relvv} and \eqref{findH} together yield
\begin{equation} \label{findvv}
\vv(1,\rho_n) = \frac{1}{2a} \left( d(\rho_n) + \sigma_n \sqrt{d^2(\rho_n) - 4} \right), \quad n \ge 1.
\end{equation}

Note that $\vv(1, \rho) = C(1,\rho) + h S(1,\rho)$.
Consequently, the definition \eqref{defMn} implies
\begin{equation} \label{findMn}
M_n = \Res_{\la = \rho_n^2} \frac{C(1,\sqrt{\la})}{S(1,\sqrt \la)} = \Res_{\la = \rho_n^2} \frac{\vv(1,\sqrt{\la})}{S(1,\sqrt \la)} = \frac{\vv(1, \rho_n)}{\dot r(\rho_n^2)}, 
\end{equation}
where $r(\rho^2) = S(1,\rho)$ and $\dot r(\la) = \frac{d}{d\la} r(\la)$, $\la = \rho^2$.

The function $r(\la)$ can be found as the infinite product by using its zeros (see, e.g., \cite[Section 1.1]{FY01}):
\begin{equation} \label{prodr}
r(\la) = \pi\prod_{n = 1}^{\infty} \frac{\rho_n^2 - \la}{n^2}.
\end{equation}

Thus, given $d(\rho)$, $\{ \rho_n \}_1^{\infty}$, and $\{ \sigma_n \}_1^{\infty}$, one can construct the Weyl sequence $\{ M_n \}_{n \ge 1}$ by formulas \eqref{findMn}, \eqref{findvv}, and \eqref{prodr}. Afterwards, the potential $q$ can be recovered by solving Inverse Problem~\ref{ip:dir}.

Let us discuss the reconstruction of the parameters $a$ and $h$. For this purpose, we need some asymptotical analysis. The standard technique (see, e.g., \cite[Section~1.1]{FY01}) implies the following representations:
\begin{align} \label{asymptvv}
& \vv(1, \rho) = \cos \rho + (h + \om) \frac{\sin \rho}{\rho} +  \frac{\ga_1(\rho)}{\rho}, \\ \label{asymptSp}
& S'(1,\rho) = \cos \rho + \frac{\om \sin \rho}{\rho} + \frac{\ga_2(\rho)}{\rho},
\end{align}
where $\om$ is defined by \eqref{omb} and $\ga_j(\rho) \in PW_-(0,1)$, $j = 1, 2$.

Substituting \eqref{asymptvv} and \eqref{asymptSp} into \eqref{defd}, we arrive at \eqref{intd}, where $b$ is given by \eqref{omb}. It follows from \eqref{intd} that
\begin{align} \label{finda+}
& a_+ = \frac{1}{2} \lim_{n \to \infty} d(2\pi n), \\ \label{findb}
& b = \lim_{n \to \infty} \bigl( 2 \pi n + \tfrac{\pi}{2}\bigr) d\bigl( 2 \pi n + \tfrac{\pi}{2}\bigr),
\end{align}
where $n$ runs over natural numbers.

Using \eqref{intd} and \eqref{asymptrho}, we obtain
\begin{equation} \label{asymptdn}
d(\rho_n) = 2 a_+ (-1)^n \left( 1 + \frac{\kappa_n}{n} \right), \quad \{ \kappa_n \} \in l_2, \quad n \ge 1.
\end{equation}
Analogously, substituting \eqref{asymptvv} and \eqref{asymptSp} into \eqref{defH} and using \eqref{asymptrho}, we get
\begin{equation} \label{asymptHn}
H(\rho_n) = 2 a_- (-1)^n \left( 1 + \frac{\tau_n}{n} \right), \quad \{ \tau_n \} \in l_2, \quad n \ge 1.
\end{equation}

Using \eqref{findH} and comparing \eqref{asymptdn} with \eqref{asymptHn}, we conclude that $\sigma_n = (-1)^n \sign a_-$ for all sufficiently large $n$. From the latter relation, the sign of $a_-$ can be found:
\begin{equation} \label{findsigna-}
\sign a_- = \lim_{n \to \infty} (-1)^n \sigma_n.
\end{equation}
Therefore, one can recover
\begin{equation} \label{finda}
a = a_+ + \sign a_- \sqrt{a_+^2 - 1}.
\end{equation}

Next, the constants $\om$ and $h$ are easily found from \eqref{asymptrho} and \eqref{omb}, respectively:
\begin{equation} \label{findomh}
\om = \lim_{n \to \infty} \pi n (\rho_n - \pi n), \quad
h = a^{-1} (b - 2 a_+ \om).
\end{equation}

Thus, we arrive at the following algorithm for solving Inverse Problem~\ref{ip:1}.

\begin{alg} \label{alg:1}
Suppose that the spectral data $d(\rho)$, $\{ \rho_n \}_{n \ge 1}$, and $\{ \sigma_n \}_{n \ge 1}$ are given. We have to find $q$, $a$, and $h$.
\begin{enumerate}
\item Find $a$ using \eqref{finda+}, \eqref{findsigna-}, and \eqref{finda}.
\item Find $h$ using \eqref{findb} and \eqref{findomh}.
\item Calculate the values $\vv(1,\rho_n)$ ($n \ge 1$) by formula \eqref{findvv}.
\item Construct the function $r(\la)$ as the infinite product \eqref{prodr} and find the derivative $\dot r(\la)$.
\item Find the numbers $\{ M_n \}_{n \ge 1}$ by \eqref{findMn}.
\item Recover the potential $q$ from the Dirichlet spectral data $\{ \rho_n, M_n \}_{n \ge 1}$ by any standard method (see \cite{Lev84, FY01}).
\end{enumerate}
\end{alg}

\section{Proofs} \label{sec:proof}

In this section, we prove Theorems~\ref{thm:nsc}, \ref{thm:loc}, and~\ref{thm:uni}. Their proofs are based on the reduction to Propositions~\ref{prop:nsc}, \ref{prop:loc}, and \ref{prop:uni}, respectively, for the classical inverse problem with the Dirichlet boundary conditions. For this purpose, we use Algorithm~\ref{alg:1} for constructive solution of the quasiperiodic inverse problem.

\begin{proof}[Proof of Theorem~\ref{thm:nsc}]
\textit{Necessity.} Suppose that $d(\rho)$, $\{ \rho_n \}_1^{\infty}$, and $\{ \sigma_n \}_1^{\infty}$ are the spectral data of the problem $\mathcal L$. Let us show that conditions 1--5 hold.

Condition 1 follows from \eqref{defd}, \eqref{asymptvv}, and \eqref{asymptSp}.

Condition 2 is valid due to Proposition~\ref{prop:nsc}.

Relation \eqref{Hd2} implies
\begin{equation} \label{d24}
d^2(\rho_n) \ge 4, \quad n \ge 1.
\end{equation}
Consequently, independently of $\sign H(\rho_n)$, we have
$$
\sign \left( d(\rho_n) + \sigma_n \sqrt{d^2(\rho_n) - 4}\right) = \sign d(\rho_n).
$$
Then, it follows from \eqref{findvv} that 
$$
\sign d(\rho_n) = \sign a \cdot \sign \vv(1, \rho_n).
$$
For the infinite product \eqref{prodr}, we have 
$$
\sign \dot r(\rho_n^2) = (-1)^n. 
$$
Using \eqref{findMn}, we get
\begin{equation} \label{signMn}
\sign M_n = (-1)^n \sign a \cdot \sign d(\rho_n).
\end{equation}
By Proposition~\ref{prop:nsc}, there holds $\sign M_n = 1$, so
\eqref{signMn} together with \eqref{d24} prove condition~3 of Theorem~\ref{thm:nsc}.

Condition 4 readily follows from \eqref{Hd2}.

Condition 5 has already been proved in Section~\ref{sec:alg}. This condition follows from \eqref{findH}, \eqref{asymptdn}, and \eqref{asymptHn}.

\medskip

\textit{Sufficiency}. Suppose that $a \in \mathbb R \setminus \{ -1, 0, 1 \}$ and data $d(\rho)$, $\{ \rho_n \}_1^{\infty}$, $\{ \sigma_n \}_1^{\infty}$ fulfill conditions 1--5 of Theorem~\ref{thm:nsc}. Let us construct the potential $q$ following steps 2--6 of Algorithm~\ref{alg:1}. We have to show that all the steps are implemented correctly. For steps 2--5, this is obvious. Indeed, the expression under the square root at step 3 is nonnegative due to condition 3 and the infinite product \eqref{prodr} converges to an entire analytic function in view of the asymptotics \eqref{asymptrho}. 

For the correctness of step~6, we have to prove that the values $\{ \rho_n, M_n \}_1^{\infty}$ fulfill the conditions of Proposition~\ref{prop:nsc}. For $\{ \rho_n \}_1^{\infty}$, this is trivial due to condition~2 of Theorem~\ref{thm:nsc}. Similarly to the proof of the necessity part, we get relation \eqref{signMn} for $\sign M_n$, which together with condition~3 implies $M_n > 0$. The asymptotics \eqref{asymptdn} is also valid, since it follows from \eqref{intd} and \eqref{asymptrho}. Using \eqref{asymptdn}, \eqref{findvv}, and condition~5, we obtain 
\begin{equation} \label{asymptvvn}
\vv(1, \rho_n) = (-1)^n \left( 1 + \frac{s_n}{n} \right), \quad \{ s_n \} \in l_2, \quad n \ge 1.
\end{equation}

Relations \eqref{prodr} and \eqref{asymptrho} together imply
\begin{equation} \label{intr}
r(\rho^2) = \frac{\sin \rho}{\rho} + \frac{\om \cos \rho}{\rho^2} + \frac{\ga_3(\rho)}{\rho^2}, \quad \ga_3 \in PW_+(0,1).
\end{equation}
Differentiation with respect to $\la = \rho^2$ yields
\begin{equation} \label{relrn}
\dot r(\rho^2) = \frac{1}{2\rho} \frac{d r(\rho^2)}{d \rho} = \frac{1}{2 \rho^2} \left( \cos \rho - (\om+1)\frac{\sin \rho}{\rho} + \frac{\ga_4(\rho)}{\rho}\right), \quad \ga_4 \in PW_-(0,1).
\end{equation}
Hence
\begin{equation} \label{asymptrn}
\dot r(\rho_n^2) = \frac{(-1)^n}{2 (\pi n)^2} \left( 1 + \frac{g_n}{n} \right), \quad \{ g_n \} \in l_2, \quad n \ge 1.
\end{equation}

Substituting \eqref{asymptvvn} and \eqref{asymptrn} into \eqref{findMn}, we arrive at the asymptotics \eqref{asymptM}. Thus, the Weyl sequence $\{ M_n \}_1^{\infty}$ satisfies all the conditions of Proposition~\ref{prop:nsc}. Therefore, $\{ \rho_n, M_n \}_1^{\infty}$ are the Dirichlet spectral data of some potential $q \in L_{2,\mathbb R}(0,1)$.

It remains to show that $d(\rho)$ and $\{ \sigma_n \}_1^{\infty}$ are spectral data of the quasiperiodic Sturm-Liouville problem \eqref{eqv}--\eqref{bc} for the reconstructed potential $q$. Suppose that, on the contrary, the problem $\mathcal L(q, a, h)$ has another Hill-type discriminant $\tilde d(\rho)$ and a sequence of signs $\{ \tilde \sigma_n \}_1^{\infty}$. Then, it follows from \eqref{findvv} that
\begin{equation} \label{dtd}
d(\rho_n) + \sigma_n \sqrt{d^2(\rho_n) - 4} = \tilde d(\rho_n) + \tilde \sigma_n \sqrt{\tilde d^2(\rho_n) - 4}, \quad n \ge 1.
\end{equation}

Note that the both values $d(\rho_n)$ and $\tilde d(\rho_n)$ satisfy condition~3 of Theorem~\ref{thm:nsc}. Consequently, considering the nine possible cases $\sigma_n, \tilde \sigma_n \in \{ -1, 0, 1 \}$, we conclude that \eqref{dtd} implies $d(\rho_n) = \tilde d(\rho_n)$ and $\sigma_n = \tilde \sigma_n$ for all $n \ge 1$. Furthermore, the functions $d(\rho)$ and $\tilde d(\rho)$ admit the representations \eqref{intd} and \eqref{intdt} with some functions $D$ and $\tilde D$, respectively. Hence
\begin{equation} \label{intDtD}
\int_0^1 (D(t) - \tilde D(t)) \sin \rho_n t \, dt = 0, \quad n \ge 1.
\end{equation}

Since the values $\{ \rho_n \}_1^{\infty}$ are all distinct and satisfy the asymptotics \eqref{asymptrho}, then the sequence $\{ \sin \rho_n t \}_1^{\infty}$ is complete in $L_2(0,1)$ (see \cite{HV01}). Therefore, it follows from \eqref{intDtD} that $D(t) = \tilde D(t)$ a.e. on $(0,1)$. Thus $d(\rho) \equiv \tilde d(\rho)$, which concludes the proof.
\end{proof}

Proceed to proving Theorem~\ref{thm:loc}. Without loss of generality, let us assume that $\{ \rho_n \}_1^{\infty}$ are real and $\rho_1 \ge 1$. This can be achieved by a shift of the spectral parameter $\rho^2 \mapsto \rho^2 + c$, where $c$ is a constant.

\begin{lem} \label{lem:difdn}
Let $D(t)$ and $\{ \rho_n \}_1^{\infty}$ satisfy the hypothesis of Theorem~\ref{thm:loc}. Then, there exists $\eps > 0$ such that, for any $\tilde D \in L_{2,\mathbb R}(0,1)$ and any reals $\{ \tilde \rho_n \}_1^{\infty}$ satisfying the inequalities \eqref{ineqDrho}, there holds
\begin{equation} \label{difdn}
|d(\rho_n) - \tilde d(\tilde \rho_n)| \le C \left( |\rho_n - \tilde \rho_n| + \frac{1}{n} |\hat D_n| + \frac{1}{n^2} \| D - \tilde D \|_{L_2}\right), \quad n \ge 1,
\end{equation}
where $d(\rho)$ and $\tilde d(\rho)$ are defined by \eqref{intd} and \eqref{intdt}, respectively; the constant $C$ depends on $d(\rho)$ and $\{ \rho_n \}_1^{\infty}$, and
$$
\hat D_n := \int_0^1 (D(t) - \tilde D(t)) \sin nt \, dt, \quad n \ge 1.
$$
\end{lem}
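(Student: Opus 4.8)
The plan is to separate the effect of changing the argument from the effect of changing the function, writing
$$
d(\rho_n) - \tilde d(\tilde\rho_n) = \bigl(d(\rho_n) - d(\tilde\rho_n)\bigr) + \bigl(d(\tilde\rho_n) - \tilde d(\tilde\rho_n)\bigr).
$$
Since $a$ and $h$ are fixed, the representations \eqref{intd} and \eqref{intdt} show that $d$ and $\tilde d$ share the same part $2a_+\cos\rho + b\rho^{-1}\sin\rho$ and differ only through the integral term. Thus the first summand is the increment of a single fixed function and will be handled by the mean value theorem, while the second summand reduces to one sine coefficient of $D - \tilde D$.

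For the first summand I would prove a uniform bound $|d'(\rho)| \le C$ on $[1-\eps,\infty)$, with $C$ depending only on $a_+$, $b$, and $\|D\|_{L_2}$, hence on $d$. Differentiating \eqref{intd} produces, besides the elementary terms $-2a_+\sin\rho$ and $b(\rho\cos\rho-\sin\rho)\rho^{-2}$, the contribution $-\rho^{-2}\int_0^1 D(t)\sin\rho t\,dt + \rho^{-1}\int_0^1 tD(t)\cos\rho t\,dt$, and each of these integrals is bounded in modulus by $\|D\|_{L_2}$ uniformly in $\rho$. Choosing $\eps<\tfrac12$ and recalling the normalization $\rho_1\ge1$ keeps the segment joining $\rho_n$ and $\tilde\rho_n$ inside $[1-\eps,\infty)$, so that $|d(\rho_n)-d(\tilde\rho_n)|\le C|\rho_n-\tilde\rho_n|$, which is exactly the first term on the right of \eqref{difdn}.

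For the second summand I would use the cancellation of the common part to obtain
$$
d(\tilde\rho_n) - \tilde d(\tilde\rho_n) = \frac{1}{\tilde\rho_n}\int_0^1 \bigl(D(t)-\tilde D(t)\bigr)\sin\tilde\rho_n t\,dt,
$$
then add and subtract the reference coefficient $\hat D_n$ and estimate the remainder via $|\sin\tilde\rho_n t - \sin\pi n t|\le |\tilde\rho_n-\pi n|\,t$. The decisive point is that the asymptotics \eqref{asymptrho} together with the bound $|\rho_n-\tilde\rho_n|\le \eps/n$ extracted from \eqref{ineqDrho} give $|\tilde\rho_n-\pi n|\le C/n$; consequently the remainder integral is at most $Cn^{-1}\|D-\tilde D\|_{L_1}\le Cn^{-1}\|D-\tilde D\|_{L_2}$, and dividing by $\tilde\rho_n\ge cn$ yields the term $Cn^{-2}\|D-\tilde D\|_{L_2}$, while the leading part contributes $\tilde\rho_n^{-1}|\hat D_n|\le Cn^{-1}|\hat D_n|$. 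Adding the two summands gives \eqref{difdn}.

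The integral bounds and the mean value estimate are routine Paley--Wiener bookkeeping; the only step needing a little care is replacing the true frequency $\tilde\rho_n$ by the reference frequency inside $\hat D_n$, that is, the uniform $O(1/n)$ control of $\tilde\rho_n-\pi n$. This control is what allows the sine difference to be absorbed into the $n^{-2}\|D-\tilde D\|_{L_2}$ error rather than contaminating the main coefficient $\hat D_n$, and I expect no genuine obstacle beyond it.
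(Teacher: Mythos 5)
Your argument is correct and is essentially the paper's proof in mirror image: the paper splits $d(\rho_n)-\tilde d(\tilde\rho_n)$ through the intermediate value $\tilde d(\rho_n)$ (function change first, then argument change, with the mean value theorem applied to $\tilde d$), whereas you pass through $d(\tilde\rho_n)$ and apply the mean value theorem to the unperturbed $d$. The two halves are estimated the same way in both proofs: a uniform bound on the $\rho$-derivative coming from the representations \eqref{intd}, \eqref{intdt}, and a replacement of the actual frequency by the reference frequency inside the integral term at cost $O(n^{-1})\|D-\tilde D\|_{L_2}$. Your orientation has a minor bookkeeping advantage: your derivative bound involves only $a_+$, $b$, $\|D\|_{L_2}$, so the dependence of $C$ on $d$ alone is immediate, while the paper, having applied the mean value theorem to $\tilde d$, needs the extra remark that for $\|D-\tilde D\|_{L_2}\le 1$ the bound on $\tfrac{d}{d\rho}\tilde d$ (a priori depending on $\tilde D$) is controlled by data attached to $d$. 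One discrepancy deserves mention: you expand about the frequency $\pi n$, so the coefficient you isolate is $\int_0^1(D(t)-\tilde D(t))\sin\pi nt\,dt$ rather than the $\hat D_n$ of the statement, which is written with $\sin nt$. This is in fact forced: since $\rho_n=\pi n+O(1/n)$ by \eqref{asymptrho}, the inequality \eqref{difdn} is false with the literal $\sin nt$ (take $D-\tilde D$ proportional to $\sin\pi nt$ and $\tilde\rho_k=\rho_k$; then the left-hand side at index $n$ is of order $n^{-1}\|D-\tilde D\|_{L_2}$ while the right-hand side is of order $n^{-2}\|D-\tilde D\|_{L_2}$). The paper's own proof performs the same expansion and thus tacitly establishes the $\sin\pi nt$ version, so the occurrence of $\sin nt$ in the statement is a typo that your argument silently corrects.
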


\begin{proof}
Clearly, there holds
\begin{equation} \label{est1}
|d(\rho_n) - \tilde d(\tilde \rho_n)| \le |d(\rho_n) - \tilde d(\rho_n)| + |\tilde d(\rho_n) - \tilde d(\tilde \rho_n)|.
\end{equation}

Using \eqref{intd}, \eqref{intdt}, and \eqref{asymptrho}, we obtain
\begin{align*}
d(\rho_n) - \tilde d(\rho_n) & = \frac{1}{\rho_n} \int_0^1 (D(t) - \tilde D(t)) \sin \rho_n t \, dt \\
& = \frac{1}{n} \int_0^1 (D(t) - \tilde D(t)) \sin n t \, dt + \int_0^1 (D(t) - \tilde D(t)) O(n^{-2}) dt,
\end{align*}
where the $O$-estimate is uniform by $t \in [0,1]$. Hence
\begin{equation} \label{est2}
|d(\rho_n) - \tilde d(\rho_n)| \le C \left( \frac{1}{n} |\hat D_n| + \frac{1}{n^2} \| D - \tilde D \|_{L_2} \right).
\end{equation}

By Lagrange's mean value theorem, we get
\begin{equation} \label{est3}
|\tilde d(\rho_n) - \tilde d(\tilde \rho_n)| \le \left| \tfrac{d}{d\rho} \tilde d(\xi_n)\right| |\rho_n - \tilde \rho_n|,
\end{equation}
where $\xi_n$ is located between $\rho_n$ and $\tilde \rho_n$. In view of \eqref{intdt}, the derivative $\tfrac{d}{d\rho} \tilde d(\rho)$ is bounded for real $\rho$ by a constant depending on $\tilde D$. If $\| D - \tilde D \|_{L_2} \le 1$, then this constant satisfies the assertion of the lemma. So, combining \eqref{est1}, \eqref{est2}, and \eqref{est3}, we arrive at \eqref{difdn}.
\end{proof}

\begin{proof}[Proof of Theorem~\ref{thm:loc}]
Let $d(\rho)$, $\{ \rho_n \}_1^{\infty}$, and $\{ \sigma_n \}_1^{\infty}$ be the spectral data of a fixed problem $\mathcal L(q, a, h)$. Then, it follows from conditions 2, 3, and 4 of Theorem~\ref{thm:nsc} and the asymptotics \eqref{asymptdn} that relations \eqref{sep} and \eqref{casede} hold with some $\de > 0$, which depends on $q$, $a$, and $h$.

Let us show that, if $\tilde D(t)$ and $\{ \tilde \rho_n \}_1^{\infty}$ fulfill the hypothesis of Theorem~\ref{thm:loc} with sufficiently small $\eps > 0$ and $\tilde d(\rho)$ is constructed by \eqref{intdt}, then the data $\tilde d(\rho)$, $\{ \tilde \rho_n \}_1^{\infty}$, $\{ \sigma_n \}_1^{\infty}$ satisfy the conditions of Theorem~\ref{thm:nsc}. Indeed, conditions 1, 4, and 5 are satisfied trivially. The inequality $\| \{ \rho_n - \tilde \rho_n \}_1^{\infty} \|_{l_2^1} \le \eps$ for sufficiently small $\eps > 0$ together with \eqref{sep} imply condition~2 for $\{ \tilde \rho_n \}_1^{\infty}$.
Applying Lemma~\ref{lem:difdn} together with relations \eqref{condtd} and \eqref{casede}, we get
\begin{equation} \label{casedt}
\begin{cases}
(-1)^n \sign a \cdot \tilde d(\tilde \rho_n) \ge 2 + \tfrac{\de}{2} & \text{if} \: \sigma_n \ne 0, \\
(-1)^n \sign a \cdot \tilde d(\tilde \rho_n) = 2 & \text{if} \: \sigma_n = 0,
\end{cases}
\end{equation}
when $\eps > 0$ is chosen sufficiently small. This implies condition~3 of Theorem~\ref{thm:nsc}. 

Applying Theorem~\ref{thm:nsc}, we conclude that there exists a potential $\tilde q \in L_{2, \mathbb R}(0,1)$ (which is obviously unique) such that $\tilde d(\rho)$, $\{ \tilde \rho_n \}_1^{\infty}$, and $\{ \sigma_n \}_1^{\infty}$ are the spectral data of the problem $\mathcal L(\tilde q, a, h)$. Thus, we have proved the local solvability of Inverse Problem~\ref{ip:2}.

It remains to obtain the stability estimate \eqref{estq}. Consider the values $H(\rho_n)$ given by \eqref{findH} and
$$
\tilde H(\tilde \rho_n) = \sigma_n \sqrt{\tilde d^2(\tilde \rho_n) - 4}, \quad n \ge 1.
$$
If $\sigma_n = 0$, then $H(\rho_n) = \tilde H(\tilde \rho_n)$. In the case $\sigma_n \ne 0$, we have
\begin{equation} \label{difHn}
H(\rho_n) - \tilde H(\tilde \rho_n) = \sigma_n \frac{(d(\rho_n) - \tilde d(\tilde \rho_n))(d(\rho_n) + \tilde d(\tilde \rho_n))}{\sqrt{d^2(\rho_n) - 4} + \sqrt{\tilde d^2(\tilde \rho_n) - 4}}.
\end{equation}

According to \eqref{casede} and \eqref{casedt}, the denominator in \eqref{difHn} is bounded from below by a positive constant. In view of \eqref{intd}, \eqref{asymptrho}, \eqref{ineqDrho}, and \eqref{intdt}, the absolute value of $d(\rho_n) + \tilde d(\tilde \rho_n)$ is bounded from above. Hence, relation \eqref{difHn} implies
\begin{equation} \label{estHn}
|H(\rho_n) - \tilde H(\tilde \rho_n)| \le C |d(\rho_n) - \tilde d(\tilde \rho_n)|, \quad n \ge 1.
\end{equation}
Here and below in this proof, the constant $C$ depends only on $q$, $a$, and $h$.

Using \eqref{relvv} and \eqref{estHn}, we get
\begin{equation} \label{difvv}
|\vv(1,\rho_n) - \tilde \vv(1, \tilde \rho_n)| \le C |d(\rho_n) - \tilde d(\tilde \rho_n)|, \quad n \ge 1.
\end{equation}

Next, we construct the function $\tilde r(\la)$ similarly to \eqref{prodr} and, by using the representation \eqref{relrn}, derive the estimate
\begin{equation} \label{difrn}
|\dot r(\rho_n^2) - \dot{\tilde r}(\tilde \rho_n^2)| \le \frac{C}{n^2} \left( |\rho_n - \tilde \rho_n| + \frac{1}{n^2} \| \{ \rho_k - \tilde \rho_k \}_1^{\infty} \|_{l_2^1}\right), \quad n \ge 1.
\end{equation}

Combining \eqref{findMn}, \eqref{difvv}, \eqref{difrn} together with \eqref{asymptvvn}, \eqref{asymptrn} and the analogous relations for $\tilde \vv(1, \tilde \rho_n)$ and $\dot{\tilde r}(\tilde \rho_n^2)$, we arrive at the estimate
$$
|M_n - \tilde M_n| \le C n^2 \left( |\rho_n - \tilde \rho_n| + |d(\rho_n) - \tilde d(\tilde \rho_n)| + \frac{1}{n^2} \| \{ \rho_k - \tilde \rho_k \}_1^{\infty} \|_{l_2^1} \right).
$$
Taking \eqref{difdn} into account, we deduce
\begin{equation} \label{difMn}
\| \{ n^{-2} (M_n - \tilde M_n)\}_1^{\infty} \}_{l_2^1} \| \le C\bigl( \| D - \tilde D \|_{L_2} +  \| \{ \rho_n - \tilde \rho_n \}_1^{\infty} \|_{l_2^1} \bigr).
\end{equation}
Therefore, for sufficiently small $\eps > 0$, the conclusion of Proposition~\ref{thm:loc} holds. Thus, estimates \eqref{estqdir} and \eqref{difMn} together imply \eqref{estq}.
\end{proof}

\begin{remark} \label{rem:loc}
Let us show that the class of spectral data perturbations that simultaneously satisfy \eqref{ineqDrho} and \eqref{condtd} is sufficiently wide. In view of condition 5 of Theorem~\ref{thm:nsc}, there is a finite number of indices $n$ such that $\sigma_n = 0$. Suppose that $\tilde \rho_n = \rho_n$ for all such $n$. Then the requirement \eqref{condtd} together with \eqref{intd} and \eqref{intdt} imply \eqref{intDtD} for this finite set of indices $n$. In other words, the corresponding coordinates of $\tilde D(t)$ by the biorthonormal system to the Riesz basis $\{ \sin \rho_n t \}_1^{\infty}$ (see \cite{HV01}) coincide with the ones of $D(t)$. The other coordinates of $\tilde D$, as well as the values $\rho_n$ for $\sigma_n \ne 0$, can be chosen with a high degree of arbitrariness to satisfy \eqref{ineqDrho}.
\end{remark}

\begin{proof}[Proof of Theorem~\ref{thm:uni}]
Suppose that the values $a$, $h$, $\om$, $\{ \sigma_n \}_1^{\infty}$, $\Omega$, and $\de$ are fixed according to the hypothesis of the theorem. Let $\left( D(t), \{ \rho_n \}_1^{\infty} \right)$ belong to the set $\mathcal S_{\Omega,\de}$, which was introduced in Definition~\ref{def:S}. Then, one can construct the function $d(\rho)$ by \eqref{intd} ($b$ is found by \eqref{omb}) and the Weyl sequence $\{ M_n \}_1^{\infty}$ by following steps 3--5 of Algorithm~\ref{alg:1}. 
Let us show that the sequence $\{ \rho_n, M_n \}_1^{\infty}$ belongs to the set $\mathcal S_{\Omega_0,\de_0}^0$ described in Definition~\ref{def:S0}, where $\Omega_0 > 0$ and $\de_0 > 0$ are some constants depending on $\Omega$ and $\de$. Specifically, we have to prove the estimates
\begin{gather} \label{boundMn}
\de_0 \le M_n \le \Omega_0, \quad \forall n \ge 1, \\ \label{boundetan}
\| \{ \eta_n \}_1^{\infty} \|_{l_2^1} \le \Omega_0,
\end{gather}
where $\{ \eta_n \}$ are the remainder terms from the asymptotics \eqref{asymptM}.

Throughout this proof, the symbols $C$ and $c$ denote various positive constants depending only on $a$, $h$, $\om$, $\{ \sigma_n \}_1^{\infty}$, $\Omega$, and $\de$. It follows from \eqref{intd}, \eqref{asymptrho} and the bounds $\| \{ \varkappa_n \}_1^{\infty} \|_{l_2} \le \Omega$, $\| D \|_{L_2} \le \Omega$ from Definition~\ref{def:S} that $|d(\rho_n)| \le C$, $n \ge 1$. In view of \eqref{findvv} and \eqref{casede}, this implies
\begin{equation} \label{boundvv}
c \le |\vv(1, \rho_n)| \le C, \quad n \ge 1.
\end{equation}
The analogous estimate for $\dot r(\rho_n^2)$ follows from \cite[Lemma~5.4]{Hryn11}:
\begin{equation} \label{boundr}
c n^{-2} \le |\dot r(\rho_n^2)| \le C n^{-2}, \quad n \ge 1.
\end{equation}
Combining \eqref{findMn}, \eqref{boundvv}, and \eqref{boundr}, we arrive at \eqref{boundMn}.

Following the proof of the sufficiency in Theorem~\ref{thm:nsc}, we obtain the asymptotics \eqref{asymptvvn} and \eqref{asymptrn} with the uniformly bounded remainder sequences:
$$
\| \{ s_n \}_1^{\infty} \|_{l_2} \le C, \quad \| \{ g_n \}_1^{\infty} \|_{l_2} \le C.
$$
Taking \eqref{findMn} into account, we arrive at the asymptotics \eqref{asymptM} with the estimate \eqref{boundetan}.

Thus, for every two pairs $\bigl( D(t), \{ \rho_n \}_1^{\infty} \bigr)$ and $\bigl( \tilde D(t), \{ \tilde \rho_n \}_1^{\infty}\bigr)$ in $\mathcal S_{\Omega,\de}$, the corresponding data $\{ \rho_n, M_n \}_1^{\infty}$ and $\{ \tilde \rho_n, \tilde M_n \}_1^{\infty}$ lie in $\mathcal S^0_{\Omega_0, \de_0}$. Therefore, Proposition~\ref{prop:uni} implies the estimate \eqref{estqdir}, where the constant $C$ depends on the fixed parameters of Theorem~\ref{thm:uni}. Following the proof of Theorem~\ref{thm:loc} step by step for $\bigl( D(t), \{ \rho_n \}_1^{\infty} \bigr)$ and $\left( \tilde D(t), \{ \tilde \rho_n \}_1^{\infty}\right)$ in $\mathcal S_{\Omega,\de}$, we obtain the uniform estimate \eqref{difMn}. Combining \eqref{estqdir} and \eqref{difMn}, we arrive at the desired uniform estimate \eqref{estq}.
\end{proof}

\section{Quasiperiodic eigenvalues} \label{sec:eigen}

In this section, we reformulate the main results by using the eigenvalues of the quasiperiodic problem $\mathcal L$ instead of the Hill-type discriminant $d(\rho)$. For this purpose, we use the recovery of an entire function from its zeros as an infinite product and the uniform stability of this recovery (see \cite{But22}).

The problem $\mathcal L$ is self-adjoint and has a countable set of real eigenvalues, which coincide with 
with the squared zeros of the function $\Delta(\rho)$ defined by \eqref{defDelta}.
According to \eqref{intd}, there holds
\begin{equation} \label{intDelta}
\Delta(\rho) = 2 a_+ \cos \rho - 2 + \frac{b \sin \rho}{\rho} + \frac{\delta(\rho)}{\rho}, 
\end{equation}
where $a_+ \in \mathbb R \setminus (-1,1)$, $b \in \mathbb R$, and $\de \in PW_-(0,1)$. Consequently, the eigenvalues $\{ \nu_0^2\} \cup \{ (\nu_n^{\pm})^2\}_1^{\infty}$ of $\mathcal L$ (counting with multiplicities) possess the asymptotics (see \cite{Plak78}):
\begin{equation} \label{asymptnu}
\nu_n^{\pm} = 2\pi n \pm \theta + \frac{b}{4\pi a_+ n} + \frac{\varkappa_n^{\pm}}{n}, \quad \{ \varkappa_n^{\pm}\} \in l_2, \quad n \ge 1,
\end{equation}
where $\theta = \arccos a_+^{-1} \in (0,\pi)$, $\theta \ne \tfrac{\pi}{2}$.

We call the problem $\mathcal L$ positive definite if all its eigenvalues are positive. This condition is non-restrictive, since it can be achieved by a shift of the spectrum $\rho^2 \mapsto \rho^2 + c$. Then, without loss of generality, we may assume that $\mathcal V := \{ \nu_0 \} \cup \{ \nu_n^{\pm}\}_1^{\infty} \subset \mathbb R_+$. Below we sometimes denote $\nu_0^+ := \nu_0$.

The standard technique based on Hadamard's Factorization Theorem (see, e.g., \cite[Theorem~1.1.4]{FY01}) allows us to reconstruct the Hill-type discriminant $d(\rho)$ from the values $\mathcal V$:
\begin{equation} \label{prodDelta}
\Delta(\rho) = 2 (a_+ - 1) \prod_{n = 0}^{\infty} \frac{(\nu_n^+)^2 - \rho^2}{(2\pi n + \theta)^2} \prod_{n = 1}^{\infty} \frac{(\nu_n^-)^2 - \rho^2}{(2\pi n - \theta)^2}, \quad d(\rho) = \Delta(\rho) + 2.
\end{equation} 

The infinite products in \eqref{prodDelta} converge absolutely and uniformly on compact sets in the $\rho$-plane for every sequence $\mathcal V$ satisfying \eqref{asymptnu}.
Obviously, the parameters $\theta$ and $a_+$ in \eqref{prodDelta} can be found from the asymptotics \eqref{asymptnu}:
\begin{equation} \label{findthe}
\theta = \lim_{n \to \infty} (\nu_n^+ - 2 \pi n), \quad a_+ = (\cos \theta)^{-1}.
\end{equation}

Denote by $\mathcal D_+(a_+, b)$ ($a_+ \in \mathbb R \setminus (-1,1)$, $b \in \mathbb R$) the class of functions $d(\rho)$ satisfying \eqref{intd} such that all the zeros of $d(\sqrt \la) - 2$ are positive. The relation \eqref{prodDelta} provides the bijection between the class $\mathcal D_+(a_+, b)$ of functions $d(\rho)$ and the sequences $\mathcal V \subset \mathbb R_+$ with the asymptotics \eqref{asymptnu}. Moreover, \cite[Theorem~7]{But22} implies the uniform stability of recovering the function $D(t)$ in \eqref{intd} from $\mathcal V$:

\begin{prop}[\hspace*{-3pt}\cite{But22}] \label{prop:But}
Let $a_+ \in \mathbb R \setminus (-1,1)$, $b \in \mathbb R$, and $\Omega > 0$ be fixed. Then, for any two sequences $\mathcal V = \{ \nu_0\} \cup \{ \nu_n^{\pm}\}_1^{\infty}$ and $\tilde{\mathcal V}	= \{ \tilde \nu_0\} \cup \{ \tilde \nu_n^{\pm}\}_1^{\infty}$ satisfying the asymptotics \eqref{asymptnu} and the corresponding estimates
\begin{equation} \label{estV}
\left( \nu_0^2 + \sum_{n = 1}^{\infty} n^2 \bigl( (\varkappa_n^+)^2 + (\varkappa_n^-)^2\bigr) \right)^{1/2} \le \Omega,
\end{equation}
there holds
\begin{equation} \label{uniD}
\| D - \tilde D \|_{L_2} \le C \| \mathcal V - \tilde{\mathcal V} \|_{l_2^1},
\end{equation}
where the functions $D$ and $\tilde D$ are related to $\mathcal V$ and $\tilde{\mathcal V}$, respectively, via \eqref{intd} and \eqref{prodDelta},
the constant $C$ depends only on $a_+$, $b$, and $\Omega$, and
\begin{equation} \label{normdifV}
\| \mathcal V - \tilde{\mathcal V} \|_{l_2^1} := \left( (\nu_0 - \tilde \nu_0)^2 + \sum_{n = 1}^{\infty} n^2 \bigl( (\nu_n^+ - \tilde \nu_n^+)^2 + (\nu_n^- - \tilde \nu_n^-)^2\bigr)\right).
\end{equation}
\end{prop}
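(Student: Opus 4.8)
The plan is to reduce the recovery of $D$ to the discrete stability of evaluating the Hill-type discriminant on the grid $\{\pi m\}$, and then to recognize the resulting operator as a discrete Hilbert transform. First I would pass to Fourier sine coefficients. Since $\{\sqrt2\sin(\pi m t)\}_{m\ge1}$ is an orthonormal basis of $L_2(0,1)$, Parseval's identity gives
$$\|D - \tilde D\|_{L_2}^2 = 2\sum_{m=1}^{\infty}|F(\pi m) - \tilde F(\pi m)|^2,\qquad F(\rho) := \int_0^1 D(t)\sin\rho t\,dt.$$
From \eqref{intDelta} one has $F(\rho) = \rho\bigl(\Delta(\rho) - 2a_+\cos\rho + 2\bigr) - b\sin\rho$; evaluating at $\rho = \pi m$, where $\sin\pi m = 0$ and $\cos\pi m = (-1)^m$, and using that $a_+$ and $b$ are common to $D$ and $\tilde D$, the common terms cancel and $F(\pi m) - \tilde F(\pi m) = \pi m\bigl(\Delta(\pi m) - \tilde\Delta(\pi m)\bigr)$. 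Hence $\|D-\tilde D\|_{L_2}^2 = 2\pi^2\sum_m m^2|\Delta(\pi m) - \tilde\Delta(\pi m)|^2$, and it suffices to bound $\bigl\{m(\Delta(\pi m) - \tilde\Delta(\pi m))\bigr\}_m$ in $l_2^1$ by $C\|\mathcal V - \tilde{\mathcal V}\|_{l_2^1}$.

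Next I would compare the two Hadamard products \eqref{prodDelta}, which share the constant $2(a_+-1)$ and the normalizers $(2\pi n\pm\theta)^2$. Writing $\Delta(\rho) = \tilde\Delta(\rho)\exp G(\rho)$ with $G(\rho) = \sum_{n}\log\frac{\mu_n - \rho^2}{\tilde\mu_n - \rho^2}$ (sum over all squared zeros $\mu_n = (\nu_n^\pm)^2$, $\tilde\mu_n = (\tilde\nu_n^\pm)^2$), the asymptotics \eqref{asymptnu} and the uniform bound \eqref{estV} guarantee that on the grid the values $\tilde\Delta(\pi m)$ are uniformly bounded and bounded away from $0$ (their two limit values $\pm2a_+ - 2$ are nonzero because $|a_+|>1$), and that $G(\pi m)$ is uniformly bounded. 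Therefore $|\Delta(\pi m) - \tilde\Delta(\pi m)| = |\tilde\Delta(\pi m)|\,|e^{G(\pi m)} - 1|\le C|G(\pi m)|$, and the problem collapses to the single estimate $\sum_m m^2|G(\pi m)|^2\le C\|\mathcal V-\tilde{\mathcal V}\|_{l_2^1}^2$.

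Finally I would linearize $G$. Up to a uniformly controlled higher-order remainder,
$$m\,G(\pi m) \approx m\sum_{n}\frac{(\nu_n^\pm - \tilde\nu_n^\pm)(\nu_n^\pm + \tilde\nu_n^\pm)}{(\tilde\nu_n^\pm - \pi m)(\tilde\nu_n^\pm + \pi m)},$$
and, setting $z_n := n(\nu_n^\pm - \tilde\nu_n^\pm)$ (so that $\|\{z_n\}\|_{l_2}\le\|\mathcal V - \tilde{\mathcal V}\|_{l_2^1}$) and using $\tilde\nu_n^\pm\approx2\pi n\pm\theta$, a partial-fraction split in the variable $2n$ rewrites the kernel as
$$\frac{2}{\pi}\left(\frac{1}{2n - m \pm\theta/\pi} - \frac{1}{2n + m}\right)(1 + o(1)),$$
i.e. a difference of two genuine discrete Hilbert transforms acting on $\{z_n\}$. \emph{The main obstacle is exactly here}: a termwise triangle-inequality bound is fatal, since $\sum_n |2n - m\pm\theta/\pi|^{-1}$ diverges logarithmically and the Schur test fails by a logarithm; one must instead retain the sign cancellation and invoke Hilbert's inequality, which yields $l_2\to l_2$ boundedness. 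The non-integer shift $\theta/\pi\in(0,1)$ — available because $\theta\in(0,\pi)$, equivalently $a_+\in\mathbb R\setminus(-1,1)$ — is what keeps the denominators bounded away from zero on the grid and removes the diagonal singularity. Feeding $\|m\,G(\pi m)\|_{l_2}\le C\|z\|_{l_2}$ back through the two reductions yields \eqref{uniD}. Equivalently and more briefly, once the asymptotics \eqref{asymptnu} and the bounds \eqref{estV} are verified, the estimate is precisely the specialization to our setting of \cite[Theorem~7]{But22}, which establishes this uniform stability of recovering an entire function of class \eqref{intd} from its zeros.
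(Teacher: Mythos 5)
The paper offers no proof of this proposition at all: it is imported verbatim as \cite[Theorem~7]{But22}, so your closing sentence (verify \eqref{asymptnu} and \eqref{estV}, then invoke Buterin's theorem) coincides exactly with the paper's treatment. Your opening reduction is also correct and clean: since $a_+$ and $b$ are common to both functions, Parseval on the basis $\{\sqrt2\sin \pi m t\}_{m\ge1}$ gives $\| D-\tilde D\|_{L_2}^2=2\pi^2\sum_m m^2|\Delta(\pi m)-\tilde\Delta(\pi m)|^2$, and your identification of the linearized kernel as a shifted discrete Hilbert transform, bounded on $l_2$ only through sign cancellation (Hilbert's inequality), with the shift $\theta/\pi\notin\mathbb Z$ supplied by $|a_+|>1$, is the right engine and a genuine insight.

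The gap is in the middle step, the factorization $\Delta=\tilde\Delta\, e^{G}$. You assert that $\tilde\Delta(\pi m)$ is uniformly bounded away from $0$ and that $G(\pi m)$ is uniformly bounded, but the hypotheses \eqref{asymptnu} and \eqref{estV} impose no separation between the zeros $\tilde\nu_n^{\pm}$ and the lattice $\{\pi m\}$: for finitely many small indices, $\tilde\nu_n^{\pm}$ may coincide with (or lie arbitrarily close to) a point $\pi m$, in which case $\tilde\Delta(\pi m)=0$ (or is arbitrarily small) while $\Delta(\pi m)$ need not be; then $G(\pi m)$ is undefined or unbounded, the termwise logarithms need not even be real (a lattice point can separate $\mu_n$ from $\tilde\mu_n$), and no constant depending only on $a_+$, $b$, $\Omega$ can make $|\Delta(\pi m)-\tilde\Delta(\pi m)|\le C|G(\pi m)|$ hold. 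The nonvanishing of the limit values $\pm 2a_+-2$ controls only $m\ge m_0(a_+,b,\Omega)$; in a \emph{uniform} stability statement the remaining finitely many $m$ cannot be dismissed, because the constant must stay bounded as a zero approaches the lattice — a configuration at which both sides of \eqref{uniD} remain perfectly finite, so the estimate itself is not in danger, only your intermediate inequality. A repair must avoid dividing by $\tilde\Delta$: for instance, telescope the difference of the two products in \eqref{prodDelta}, $\prod_n A_n-\prod_n B_n=\sum_k\bigl(\prod_{n<k}A_n\bigr)(A_k-B_k)\bigl(\prod_{n>k}B_n\bigr)$, which requires only uniform \emph{upper} bounds on the one-factor-deleted hybrid products (these do hold for this class, in the spirit of \cite[Lemma~5.4]{Hryn11}) and reproduces your kernel; but then one must check that the sign cancellation needed for Hilbert's inequality survives the $k$-dependence of the hybrid products — precisely the kind of technical work that constitutes the actual content of \cite{But22}, and which your sketch bypasses.
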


Let us apply the above arguments to the following inverse spectral problem:

\begin{ip} \label{ip:eigen}
Given $\mathcal V$, $\{ \rho_n\}_1^{\infty}$, and $\{ \sigma_n \}_1^{\infty}$, find $q$, $a$, and $h$.
\end{ip}

By using \eqref{prodDelta} and \eqref{findthe}, Inverse Problem~\ref{ip:eigen} is equivalently reduced to Inverse Problem~\ref{ip:1}, which implies the uniqueness of solution.
The necessary and sufficient conditions of solvability, the local solvability and stability, and the uniform stability of Inverse Problem~\ref{ip:eigen} are obtained as corollaries of Theorems~\ref{thm:nsc}, \ref{thm:loc}, and~\ref{thm:uni}, respectively:

\begin{cor}
Let $a \in \mathbb R \setminus \{-1, 0, 1\}$ be fixed. Then, for a collection $\mathcal V$, $\{ \rho_n\}_1^{\infty}$, $\{ \sigma_n\}_1^{\infty}$ to be the spectral data of a positive definite problem $\mathcal L$ of form \eqref{eqv}--\eqref{bc}, it is necessary and sufficient to fulfill the requirement $\mathcal V \subset \mathbb R_+$, the asymptotics \eqref{asymptnu}, and conditions 2--5 of Theorem~\ref{thm:nsc}, where the function $d(\rho)$ is defined by \eqref{prodDelta}.
\end{cor}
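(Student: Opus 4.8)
The plan is to establish the equivalence between Inverse Problem~\ref{ip:eigen} and Inverse Problem~\ref{ip:1} via the bijection \eqref{prodDelta}, and then transfer the characterization of Theorem~\ref{thm:nsc} across this correspondence. First I would observe that once $\mathcal V \subset \mathbb R_+$ with the asymptotics \eqref{asymptnu} is given, the parameters $\theta$ and $a_+$ are recovered by \eqref{findthe}, and then the Hadamard factorization \eqref{prodDelta} produces a well-defined entire function $d(\rho)$. The crux is that \eqref{prodDelta} is a genuine bijection between sequences $\mathcal V \subset \mathbb R_+$ satisfying \eqref{asymptnu} and functions $d(\rho)$ in the class $\mathcal D_+(a_+,b)$ of the form \eqref{intd} whose shifted zeros are all positive. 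This is exactly the content of the discussion preceding Proposition~\ref{prop:But}.

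With that bijection in hand, the proof of the corollary reduces to a translation of hypotheses. In the necessity direction, if $\mathcal V$, $\{\rho_n\}$, $\{\sigma_n\}$ are the spectral data of a positive definite $\mathcal L$, then by definition the squared zeros of $\Delta(\rho)=d(\rho)-2$ are precisely the (positive) eigenvalues collected in $\mathcal V$, so $\mathcal V \subset \mathbb R_+$ and the asymptotics \eqref{asymptnu} hold by the reference to \cite{Plak78}; the reconstructed $d(\rho)$ via \eqref{prodDelta} coincides with the original Hill-type discriminant by Hadamard's theorem, hence conditions 2--5 of Theorem~\ref{thm:nsc} follow directly from the necessity part of that theorem. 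Note that condition~1 is subsumed: the form \eqref{intd} is automatic from \eqref{prodDelta} together with \eqref{intDelta}, so it need not be listed separately.

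In the sufficiency direction, given $\mathcal V \subset \mathbb R_+$ with \eqref{asymptnu} and conditions 2--5, I would construct $d(\rho)$ through \eqref{prodDelta}; this $d(\rho)$ automatically satisfies condition~1 of Theorem~\ref{thm:nsc}, and by hypothesis it satisfies conditions 2--5. Theorem~\ref{thm:nsc} then yields a problem $\mathcal L(q,a,h)$ having $d(\rho)$, $\{\rho_n\}$, $\{\sigma_n\}$ as its spectral data. It remains to check that this $\mathcal L$ is positive definite, i.e.\ that its eigenvalues coincide with $\mathcal V \subset \mathbb R_+$; but the eigenvalues are the squared zeros of $\Delta(\rho)=d(\rho)-2$, and by the injectivity of the factorization \eqref{prodDelta} these zeros are exactly the prescribed $\mathcal V$, so positivity is inherited from $\mathcal V \subset \mathbb R_+$.

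The main obstacle I anticipate is the careful verification that \eqref{prodDelta} is well-defined and injective as a map on the full admissible set of sequences, rather than merely formal: one must confirm that the infinite products converge to an entire function of the correct exponential type, that the resulting $d(\rho)$ genuinely has the representation \eqref{intd} with the prescribed leading coefficient $2a_+$ and linear term governed by $b$, and that distinct admissible sequences $\mathcal V$ produce distinct discriminants. All of this is guaranteed by the Hadamard/Paley--Wiener machinery cited around \eqref{prodDelta} and by Proposition~\ref{prop:But}; the rest of the argument is a routine matching of the two problem formulations through this established bijection.
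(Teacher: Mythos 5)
Your proposal is correct and follows essentially the same route as the paper: the paper also treats the corollary as an immediate consequence of the bijection \eqref{prodDelta} between sequences $\mathcal V \subset \mathbb R_+$ with asymptotics \eqref{asymptnu} and discriminants in the class $\mathcal D_+(a_+,b)$, which reduces Inverse Problem~\ref{ip:eigen} to Inverse Problem~\ref{ip:1} and lets Theorem~\ref{thm:nsc} be transferred directly, with condition~1 absorbed into the factorization. Your necessity and sufficiency translation, including the observation that positive definiteness is inherited because the zeros of the product \eqref{prodDelta} are exactly the prescribed $\nu_n^{\pm}$, matches the paper's intended argument.
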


\begin{cor} \label{cor:loc}
Let $q \in L_{2,\mathbb R}(0,1)$, $a \in \mathbb R \setminus \{ -1, 0, 1 \}$, and $h \in \mathbb R$ be fixed, and let $\mathcal V \subset \mathbb R_+$, $\{ \rho_n \}$, $\{ \sigma_n \}$ be the spectral data of $\mathcal L(q, a, h)$ in the sense of Inverse Problem~\ref{ip:eigen}. Then, there exists $\eps > 0$ (which depends on $q$, $a$, and $h$) such that, for any $\tilde{\mathcal V} \subset \mathbb R_+$ and any reals $\{ \tilde \rho_n^2 \}_1^{\infty}$ satisfying the inequalities
$$
\| \mathcal V - \tilde{\mathcal V} \|_{l_2^1} \le \eps, \quad 
\| \{ \rho_n - \tilde \rho_n \}_1^{\infty} \|_{l_2^1} \le \eps
$$
and the condition \eqref{condtd}, there exists a unique $\tilde q \in L_{2,\mathbb R}(0,1)$ such that $\tilde{\mathcal V}$, $\{ \tilde \rho_n \}_1^{\infty}$, and $\{ \sigma_n \}_1^{\infty}$ are the spectral data of the problem $\mathcal L(\tilde q, a, h)$. Moreover, the following estimate holds:
\begin{equation} \label{unieig}
\| q - \tilde q \|_{L_2} \le C \left( \| \mathcal V - \tilde{\mathcal V} \|_{l_2^1} + \| \{ \rho_n - \tilde \rho_n \}_1^{\infty} \|_{l_2^1} \right),
\end{equation}
where the constant $C > 0$ depends on $q$, $a$, and $h$. Above, the norm $\| \mathcal V - \tilde{\mathcal V} \|_{l_2^1}$ is defined in \eqref{normdifV} and the function $\tilde d(\rho)$ in \eqref{condtd} is defined by the formula
\begin{equation} \label{proddt}
\tilde d(\rho) = 2 (a_+ - 1) \prod_{n = 0}^{\infty} \frac{(\tilde \nu_n^+)^2 - \rho^2}{(2\pi n + \theta)^2} \prod_{n = 1}^{\infty} \frac{(\tilde \nu_n^-)^2 - \rho^2}{(2\pi n - \theta)^2} + 2.
\end{equation}
\end{cor}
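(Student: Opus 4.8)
The plan is to reduce Corollary~\ref{cor:loc} to Theorem~\ref{thm:loc} by inserting the uniform stability of the recovery of $D(t)$ from $\mathcal V$ that is provided by Proposition~\ref{prop:But}. The passage from Inverse Problem~\ref{ip:eigen} to Inverse Problem~\ref{ip:2} is already encoded in the factorization \eqref{prodDelta}: from $\mathcal V \subset \mathbb R_+$ one builds $d(\rho)$ and reads off $D(t)$ from the representation \eqref{intd}, and likewise $\tilde D(t)$ is obtained from $\tilde{\mathcal V}$ through \eqref{proddt}. Thus the whole argument reduces to controlling $\| D - \tilde D \|_{L_2}$ by $\| \mathcal V - \tilde{\mathcal V} \|_{l_2^1}$ and feeding the result into Theorem~\ref{thm:loc}.

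First I would fix $\mathcal L(q,a,h)$ with spectral data $\mathcal V$, $\{ \rho_n \}$, $\{ \sigma_n \}$, construct $D(t)$ as above, and apply Theorem~\ref{thm:loc} to obtain the radius $\eps_0 > 0$ and the constant $C_0$ attached to $(q,a,h)$. The essential observation is that $a_+$, $\theta$, and $b$ in \eqref{asymptnu} and \eqref{prodDelta} are fixed. Indeed, $a_+ = (\cos\theta)^{-1}$ is determined by the fixed $a$, while $b$ is the non-$l_2$ part of the asymptotics: the bound $\| \mathcal V - \tilde{\mathcal V} \|_{l_2^1} \le \eps$ forces $\nu_n^{\pm} - \tilde\nu_n^{\pm} = O(1/n)$ with an $l_2$ coefficient, so the perturbation only alters the $l_2$ remainders $\varkappa_n^{\pm}$ and leaves $b$, equivalently $\om$ via \eqref{omb}, unchanged. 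This is the same mechanism that preserves $\om$ under the $l_2^1$ perturbation of $\{ \rho_n \}$ in \eqref{asymptrho}, so the two reconstructions are consistent and $\tilde d$ given by \eqref{proddt} coincides with $\tilde d$ given by \eqref{intdt} with the fixed $b$.

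Next I would check the hypotheses of Proposition~\ref{prop:But} for the pair $\mathcal V$, $\tilde{\mathcal V}$. The fixed $\mathcal V$ fulfils \eqref{asymptnu} and \eqref{estV} with some $\Omega_0$; writing $\tilde\varkappa_n^{\pm} = \varkappa_n^{\pm} + n(\tilde\nu_n^{\pm} - \nu_n^{\pm})$ and $\tilde\nu_0 = \nu_0 + (\tilde\nu_0 - \nu_0)$ and using $\| \mathcal V - \tilde{\mathcal V} \|_{l_2^1} \le \eps \le 1$, one sees that $\tilde{\mathcal V}$ satisfies \eqref{asymptnu} with the same $a_+$, $b$ and the estimate \eqref{estV} with $\Omega := \Omega_0 + 1$. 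Proposition~\ref{prop:But} then yields $\| D - \tilde D \|_{L_2} \le C_1 \| \mathcal V - \tilde{\mathcal V} \|_{l_2^1}$, with $C_1$ depending only on $a_+$, $b$, $\Omega_0$.

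Finally, choosing $\eps \le \eps_0$ small enough that $C_1 \eps \le \eps_0$, the inequalities \eqref{ineqDrho} hold for $(\tilde D, \{ \tilde\rho_n \})$, the condition \eqref{condtd} is assumed, and so Theorem~\ref{thm:loc} supplies a unique $\tilde q \in L_{2,\mathbb R}(0,1)$ whose spectral data are $\tilde D$, $\{ \tilde\rho_n \}$, $\{ \sigma_n \}$, that is, $\tilde{\mathcal V}$, $\{ \tilde\rho_n \}$, $\{ \sigma_n \}$ in the sense of Inverse Problem~\ref{ip:eigen}, together with \eqref{estq}. Substituting $\| D - \tilde D \|_{L_2} \le C_1 \| \mathcal V - \tilde{\mathcal V} \|_{l_2^1}$ into \eqref{estq} produces \eqref{unieig}. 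I expect the main obstacle to be the third paragraph: one must confirm that an $l_2^1$-small perturbation of $\mathcal V$ leaves the asymptotic constants $a_+$ and $b$ untouched, so that Proposition~\ref{prop:But} applies verbatim, and that the two routes to $\tilde d$, through \eqref{proddt} and through \eqref{intdt}, agree; once this is settled, the remainder is bookkeeping of constants.
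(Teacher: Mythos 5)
Your proposal is correct and takes essentially the same route as the paper's proof: the paper likewise observes that $l_2^1$-closeness of $\tilde{\mathcal V}$ to $\mathcal V$ forces the asymptotics \eqref{asymptnu} with the same $a_+$ and $b$, so that $\tilde d$ defined by \eqref{proddt} admits the representation \eqref{intdt} with some $\tilde D \in L_{2,\mathbb R}(0,1)$, and then invokes Proposition~\ref{prop:But} to get \eqref{uniD} and concludes immediately from Theorem~\ref{thm:loc}. Your write-up only adds detail the paper leaves implicit (the explicit formula $\tilde\varkappa_n^{\pm} = \varkappa_n^{\pm} + n(\tilde\nu_n^{\pm} - \nu_n^{\pm})$ verifying \eqref{estV} for a suitable $\Omega$, and the final choice of $\eps$ so that $C_1\eps$ fits within the radius of Theorem~\ref{thm:loc}).
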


\begin{proof}
The inequality \eqref{unieig} for any $\eps < \infty$ implies the asymptotics \eqref{asymptnu} for the sequence $\tilde{\mathcal V}$. Therefore, the function $\tilde d(\rho)$ is correctly defined by formula \eqref{proddt} and admits the representation \eqref{intdt} with the same $a_+$ and $b$ as $d(\rho)$ and with some $\tilde D \in L_{2,\mathbb R}(0,1)$. Furthermore, by virtue of Proposition~\ref{prop:But}, there holds the estimate \eqref{uniD}, which allows us to immediately obtain Corollary~\ref{cor:loc} from Theorem~\ref{thm:loc}.
\end{proof}

Similarly to $\mathcal S_{\Omega,\de}$, define the set $\mathscr S_{\Omega,\de}$ of the pairs $\left( \mathcal V, \{ \rho_n \}_1^{\infty}\right)$ that satisfy all the requirements of Definition~\ref{def:S} by only replacing ``$D \in L_{2,\mathbb R}(0,1)$, $\| D \|_{L_2} \le \Omega$'' with ``$\mathcal V \subset \mathbb R_+$, \eqref{asymptnu} and \eqref{estV} hold''. Therein, the function $d(\rho)$ is supposed to be constructed by \eqref{prodDelta}.
Note that, by Proposition~\ref{prop:But}, the bound \eqref{estV} directly implies $\| D \|_{L_2} \le \Omega$. Consequently, Theorem~\ref{thm:uni} yields the following corollary on the uniform stability of Inverse Problem~\ref{ip:eigen}:

\begin{cor}
Let $a$, $h$, $\om$, $\{ \sigma_n \}_1^{\infty}$, $\Omega$, and $\de$ be fixed values satisfying the hypothesis of Theorem~\ref{thm:uni}. Then, for any two pairs $\left( \mathcal V, \{ \rho_n \}_1^{\infty}\right)$ and $\left( \tilde{\mathcal V}, \{ \tilde \rho_n \}_1^{\infty}\right)$ in $\mathscr S_{\Omega,\de}$, the corresponding potentials $q$ and $\tilde q$ recovered from the spectral data $\left( \mathcal V, \{ \rho_n \}_1^{\infty}, \{ \sigma_n \}_1^{\infty} \right)$ and $\left( \tilde{\mathcal V}, \{ \tilde \rho_n \}_1^{\infty}, \{ \sigma_n \}_1^{\infty} \right)$, respectively, satisfy the estimate \eqref{unieig}, where the constant $C$ depends only on $a$, $h$, $\om$, $\{ \sigma_n \}_1^{\infty}$, $\Omega$, and $\de$.
\end{cor}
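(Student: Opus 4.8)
The plan is to mirror the proof of Corollary~\ref{cor:loc}: reduce the statement to Theorem~\ref{thm:uni} by passing from each eigenvalue sequence $\mathcal V$ to the function $D(t)$ of the representation \eqref{intd}, and then control $\|D - \tilde D\|_{L_2}$ by $\|\mathcal V - \tilde{\mathcal V}\|_{l_2^1}$ through the uniform recovery estimate \eqref{uniD} of Proposition~\ref{prop:But}. In this way the target bound \eqref{unieig} decomposes as a composition of two uniform Lipschitz estimates, one for the map $\mathcal V \mapsto D$ and one for the map $(D, \{ \rho_n \}) \mapsto q$, with constants depending only on the fixed data.

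First I would take two pairs $\bigl(\mathcal V, \{ \rho_n \}\bigr)$ and $\bigl(\tilde{\mathcal V}, \{ \tilde \rho_n \}\bigr)$ in $\mathscr S_{\Omega,\de}$ and construct the corresponding Hill-type discriminants $d(\rho)$, $\tilde d(\rho)$ by \eqref{prodDelta}; each admits the representation \eqref{intd} with some $D, \tilde D \in L_{2,\mathbb R}(0,1)$ sharing the same $a_+$ and $b$ (the latter fixed by $a$, $h$, $\om$ via \eqref{omb}). By the observation preceding the corollary, the bound \eqref{estV} yields $\|D\|_{L_2} \le \Omega$ and $\|\tilde D\|_{L_2} \le \Omega$. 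Since all remaining requirements of Definition~\ref{def:S} --- the reality and asymptotics \eqref{asymptrho} of $\{ \rho_n \}$ with $\| \{ \varkappa_n \} \|_{l_2} \le \Omega$, together with the separation and sign conditions \eqref{sep}, \eqref{casede} --- are imposed verbatim in the definition of $\mathscr S_{\Omega,\de}$, I conclude that $\bigl(D, \{ \rho_n \}\bigr)$ and $\bigl(\tilde D, \{ \tilde \rho_n \}\bigr)$ both lie in $\mathcal S_{\Omega,\de}$. Theorem~\ref{thm:uni} then supplies the estimate \eqref{estq} with a constant $C$ depending only on $a$, $h$, $\om$, $\{ \sigma_n \}$, $\Omega$, $\de$.

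It remains to estimate the right-hand side of \eqref{estq}. The term $\| \{ \rho_n - \tilde \rho_n \} \|_{l_2^1}$ already appears in the target inequality \eqref{unieig}, so no further work is needed there. For the term $\|D - \tilde D\|_{L_2}$ I would invoke Proposition~\ref{prop:But}, which gives $\|D - \tilde D\|_{L_2} \le C \| \mathcal V - \tilde{\mathcal V} \|_{l_2^1}$ with $C$ depending only on $a_+$, $b$, $\Omega$; since $a_+$ and $b$ are determined by $a$, $h$, $\om$, this constant is admissible. Substituting this bound into \eqref{estq} produces \eqref{unieig} with the asserted dependence of the constant.

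The argument is a clean composition, so I do not expect a genuine analytic obstacle; the only point demanding attention is the bookkeeping of constants. Specifically, one must verify that membership of $\bigl(\mathcal V, \{ \rho_n \}\bigr)$ in $\mathscr S_{\Omega,\de}$ forces $\bigl(D, \{ \rho_n \}\bigr) \in \mathcal S_{\Omega,\de}$ --- so that Theorem~\ref{thm:uni} applies with a constant uniform over the set --- and that the constant in Proposition~\ref{prop:But} depends only on the permitted parameters $a_+$, $b$, $\Omega$, hence ultimately on $(a, h, \om, \Omega)$. Both facts are already furnished by the cited results and by the remark established before the corollary.
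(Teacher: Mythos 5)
Your proposal is correct and follows essentially the same route as the paper: the paper likewise passes from $\mathcal V$ to $D$ via \eqref{prodDelta}, uses the remark that \eqref{estV} forces $\| D \|_{L_2} \le \Omega$ so that $\bigl(D, \{ \rho_n \}\bigr) \in \mathcal S_{\Omega,\de}$, applies Theorem~\ref{thm:uni} to get \eqref{estq}, and then converts $\| D - \tilde D \|_{L_2}$ into $\| \mathcal V - \tilde{\mathcal V} \|_{l_2^1}$ by the uniform estimate \eqref{uniD} of Proposition~\ref{prop:But}, with the constant admissible since $a_+$ and $b$ are determined by $a$, $h$, $\om$. Your bookkeeping of the constants matches the paper's intended argument, so no gap remains.
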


\medskip

{\bf Funding.} This work was supported by Grant 24-71-10003 of the Russian Science Foundation, https://rscf.ru/en/project/24-71-10003/.

\medskip

\noindent Natalia Pavlovna Bondarenko \\

\noindent 1. Department of Mechanics and Mathematics, Saratov State University, 
Astrakhanskaya 83, Saratov 410012, Russia, \\

\noindent 2. Department of Applied Mathematics, Samara National Research University, \\
Moskovskoye Shosse 34, Samara 443086, Russia, \\

\noindent 3. S.M. Nikolskii Mathematical Institute, RUDN University, 6 Miklukho-Maklaya St, Moscow, 117198, Russia, \\

\noindent 4. Moscow Center of Fundamental and Applied Mathematics, Lomonosov Moscow State University, Moscow 119991, Russia.\\

\noindent e-mail: {\it bondarenkonp@sgu.ru}

\end{document}